\newcommand{\GL}{\mathrm{GL}}
\newcommand{\fqn}{\mathbb{F}_{q^n}}
\newcommand{\F}{\mathbb{F}}
\newcommand{\fq}{\mathbb{F}_q}
\newcommand{\order}[1]{\left|#1\right|}
\newtheorem{theorem}{Theorem}[section]
\newtheorem{lemma}[theorem]{Lemma}
\newtheorem{corollary}[theorem]{Corollary}
\newtheorem{proposition}[theorem]{Proposition}
\theoremstyle{definition}
\newtheorem{definition}[theorem]{Definition}
\newtheorem{example}[theorem]{Example}
\theoremstyle{remark}
\newtheorem{remark}[theorem]{Remark}
\author[L. Reis]{Lucas Reis}
\address{Departamento de Matem\'{a}tica, Universidade Federal de Minas Gerais, Belo Horizonte MG, Brazil}
\email{lucasreismat@gmail.com}
\author[Q. Wang]{Qiang Wang}
\address{School of Mathematics and Statistics, Carleton University, Ottawa, Ontario, K1S 5B6, Canada}
\thanks{The research of the author is partially supported by NSERC of Canada (RGPIN- 2017-06410)}
\email{wang@math.carleton.ca}
\keywords{ finite fields, index, character sums, value sets, permutation polynomials, linearized polynomials}
\subjclass[2010]{Primary 11T06, Secondary 12E20}
\begin{document}

\title[Additive index of polynomials]{The additive index of polynomials over finite fields} 

\begin{abstract}
In this paper we introduce the additive analogue of the index of a polynomial over finite fields. We study several problems in the theory of polynomials over finite fields  in terms of their additive indices,  such as value set sizes, bounds on multiplicative character sums,  and characterizations of  permutation polynomials.
\end{abstract}

\maketitle

\section{Introduction}

Let $q=p^n$ be a power of a prime number $p$ and let $\F_q$ be the finite field of $q$ elements.  
%^The degree of a polynomial is an important parameter in the study of numerous problems on polynomials over finite fields, especially in the study of distribution of polynomials over finite fields.  
It is well known that every polynomial $P$ over $\mathbb{F}_q$ such that $P(0) =b$
has the form $ax^rf(x^s)+b$ for some positive integers
 $r, s$ such that $s\mid (q-1)$. There are different ways to choose $r, s$ in the form
$ax^rf(x^s)+b$.   However,   motivated by the study of cyclotomic mappings in \cite{NW:05} and
\cite{Wang:07}, a unique way in terms of the index of a polynomial was introduced in \cite{AGW:09}. %  In fact, any  non-constant  polynomial $g(x) \in\mathbb{F}_q[x]$ of
%degree $\leq q-1$  can be written {\it uniquely} as
%$g(x) = a(x^rf(x^{(q-1)/\ell}))+b$ such that the degree of $f$ is less than the index $\ell$ which is defined below
To be more precise, given  
$$P(x)=a(x^d+a_{d-i_1} x^{d-i_1}+\cdots+a_{d-i_k} x^{d-i_k})+b,$$
where $a,~a_{d-i_j}\neq 0$, $i_0 = 0  <  i_1 < \cdots < i_k < d$,  $j=1, \dots, k$.  The case that $k=0$ is trivial. Thus we shall assume that
$k\geq 1$. Write $d-i_k=r$, the vanishing order of $x$ at $0$ (i.e., the lowest degree of $x$ in $P(x)-b$ is $r$).
Then $P(x)=a x^r f(x^{(q-1)/\ell}) +b,$ where $f(x)=
x^{e_0}+a_{d-i_1} x^{e_1}+\cdots+ a_{d-i_{k-1}}x^{e_{k-1}} + a_{r}
$,  $s= \gcd(d-r,d-r-i_1,\dots, d-r-i_{k-1}, q-1)$, $d-r = e_0 s$,  $d-r-i_j = e_j s$, $1\leq j\leq k-1$, and 
  $\ell :=\frac{q-1}{s}$. 
% $$\ell=\frac{q-1}{\gcd(d-r,d-r-i_1,\dots, d-r-i_{k-1}, q-1)} := \frac{q-1}{s},$$
Hence in this case $\gcd(e_0, e_1, \dots, e_{k-1}, \ell)=1$. The integer $\ell=\frac{q-1}{s}$ is called the
  {\it index} of $P(x)$.  One can see that the greatest common divisor condition in the defintion of $s$ makes the index $\ell$ minimal among those possible choices.
    
This  notion of the index of a polynomial over a finite field  was  first  introduced   to study the distribution of permutation polynomials over finite fields \cite{AGW:09}. It turns out that this parameter is also useful in studying  value set size bounds  \cite{WanWang},   character sum bounds \cite{MWW14},  Carltiz rank \cite{IsikWinterhof},  among others; see a recent survey \cite{Wang:19} and the references therein for more details. 

In this paper we introduce the additive analogue of this notion so that we can also write any polynomial over finite field uniquely in terms of its additive index. Namely,  $P(x)=f(\mathcal L(x))+M(x)$  where   $\mathcal L(x), M(x)$ are $p$-linearized polynomials over $\F_{q}$, $\deg(M)<\deg(\mathcal L)$ and $\mathcal L(x)$ splits completely over $\F_{q}$. 
 In fact, we can provide a simple method to compute the additive index: for more details, see Theorem~\ref{thm:index} and Corollary 3.4. As consequences,  we  study the value set size problem and multiplicative character sum problem for polynomials  in terms of their additive indices. In the former case,  we reduce the problem of finding value set size of a polynomial to counting the number of images of distinct coset representatives of certain subspaces determined by its addtive index.  This reduction is nontrivial as long as the additive index of the polynomial is nontrivial. In particular,  if  a polynomial of additive index $n-k$  is  not a PP then its value set size is upper bounded by $p^n-p^k$.  In the latter case, we reduce the multiplicative character sum over the finite field to the sum over certain affine space and thus obtain some nontrivial upper bounds which also improves the well known Weil's bound in many cases.  We also provide some structural and construction results on permutation polynomials and their compositional invereses based on decompositions that describe the additive index.  Special permutations with certain cycle structures are also studied.  
%; results on the construction of permutation polynomials using the ideas of the present work are provided in a forthcoming paper.

Here goes the structure of the paper. In Section 2 we provide some preliminary results on linearized polynomials that might be of independent interest. In Section 3 we introduce the additive index of a polynomial and provide an interesting method of computing it. In Section 4 we provide   applications of the additive index to the  value set size problem and then character sum bound problem respectively. Finally the results of permutation polynomials and their compositional inverses are addressed in Section 5.

\section{Preparation}
This section provides some machinery that is further used. Let $Q$ be a power of a prime $p$. By a $Q$-linearized polynomial we mean a polynomial of the form $\sum_{i=0}^ma_ix^{Q^i}$, where $a_i\in \overline{\F}_p$. If $f(x)=L(x)+c$ with $L$ a $Q$-linearized polynomial, then $f$ is $Q$-affine. 
From the well-known identity $(a+b)^Q=a^Q+b^Q$, we see that $Q$-linearized polynomials with coefficients in $\F_{Q^t}$ induce $\F_Q$-linear maps over every finite extension of $\F_{Q^t}$. We have the following technical results.

\begin{lemma}\label{lem:b1}
If $L\in \overline{\F}_{p}[x]$ is a polynomial such that $L(x+a)=L(x)+L(a)$ for every $a\in\overline {\F}_p$ and $L(b x)=b L(x)$ for every $b\in \F_Q$, then $L$ is $Q$-linearized.
\end{lemma}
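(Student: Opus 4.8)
The plan is to prove the lemma in two stages: first deduce from the additivity hypothesis alone that $L$ is a $p$-linearized polynomial, and then use the $\F_Q$-homogeneity hypothesis to sharpen ``$p$-linearized'' to ``$Q$-linearized''.

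\emph{Stage 1: $L$ is a $p$-polynomial.} First I would promote the one-parameter family of identities $L(x+a)=L(x)+L(a)$, $a\in\overline{\F}_p$, to a single two-variable identity. Put $F(x,y):=L(x+y)-L(x)-L(y)\in\overline{\F}_p[x,y]$ and write $F=\sum_j g_j(x)\,y^j$. The hypothesis gives $\sum_j g_j(x_0)a^j=0$ for all $x_0,a\in\overline{\F}_p$; fixing $x_0$ and letting $a$ range over the infinite field $\overline{\F}_p$ forces $g_j(x_0)=0$ for each $j$, and then letting $x_0$ vary forces every $g_j\equiv0$, so $F\equiv0$ in $\overline{\F}_p[x,y]$. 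Now write $L(x)=\sum_{k\ge0}c_kx^k$. From $L(0)=2L(0)$ we get $c_0=0$, and expanding $F=\sum_k c_k\bigl((x+y)^k-x^k-y^k\bigr)$ shows that the coefficient of $x^jy^{k-j}$ (for $1\le j\le k-1$) equals $c_k\binom{k}{j}$. Hence $c_k\ne0$ forces $\binom{k}{j}\equiv0\pmod p$ for every $0<j<k$, which by Lucas' theorem holds precisely when $k$ is a power of $p$. Therefore $L(x)=\sum_{e\ge0}a_ex^{p^e}$ is $p$-linearized. (One could instead simply quote the classical fact that the additive polynomials over a field of characteristic $p$ are exactly the $p$-polynomials.)

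\emph{Stage 2: upgrading to $Q$.} Write $Q=p^s$. Substituting $L(x)=\sum_e a_ex^{p^e}$ into $L(bx)=bL(x)$ and comparing the coefficient of $x^{p^e}$ gives $a_e(b^{p^e}-b)=0$ for all $b\in\F_Q$. So whenever $a_e\ne0$ we have $b^{p^e}=b$ for every $b\in\F_Q$, equivalently $(Q-1)\mid(p^e-1)$; since $\gcd(p^s-1,p^e-1)=p^{\gcd(s,e)}-1$, this is equivalent to $s\mid e$, and for such $e$ one has $x^{p^e}=x^{Q^{e/s}}$. Consequently $L(x)=\sum_f a_{sf}x^{Q^f}$ is $Q$-linearized, which proves the lemma.

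I expect the only genuine obstacle to be the classical input of Stage 1 — concretely, the clean identification (via Lucas/Kummer) of the integers $k\ge1$ for which every interior binomial coefficient $\binom{k}{j}$, $0<j<k$, vanishes modulo $p$. Everything else is coefficient bookkeeping together with the standard $\gcd$ identity $\gcd(p^s-1,p^e-1)=p^{\gcd(s,e)}-1$.
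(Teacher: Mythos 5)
Your proof is correct, but it follows a genuinely different route from the paper's. You argue directly at the level of coefficients: first you convert the one-parameter additivity hypothesis into the two-variable identity $L(x+y)=L(x)+L(y)$ (using that $\overline{\F}_p$ is infinite), then the vanishing of the interior binomial coefficients $\binom{k}{j}\bmod p$ (Lucas/Kummer) forces $L$ to be a $p$-polynomial, and finally the homogeneity condition $L(bx)=bL(x)$, compared coefficientwise, forces every exponent $p^e$ with nonzero coefficient to satisfy $(Q-1)\mid(p^e-1)$, i.e.\ $s\mid e$ where $Q=p^s$, so each surviving monomial is $x^{Q^{e/s}}$. The paper instead avoids all coefficient analysis: it picks $t$ large enough that $Q^t>\deg L$ and $L\in\F_{Q^t}[x]$, notes that $L$ induces an $\F_Q$-linear map of $\F_{Q^t}$, and uses the counting fact that the $Q^{t^2}$ linear maps of $\F_{Q^t}$ are exactly (and uniquely, in degree $<Q^t$) represented by $Q$-linearized polynomials of degree at most $Q^{t-1}$, so $L$ must coincide with its linearized representative. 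Your argument is more elementary and self-contained (it quotes only Lucas' theorem and the identity $\gcd(p^s-1,p^e-1)=p^{\gcd(s,e)}-1$, rather than the representation/counting theorem for linear maps), and it makes explicit exactly which monomials can occur; the paper's argument is shorter and sidesteps the combinatorics by a dimension count over a sufficiently large field. Both correctly use the full hypothesis — note, as you implicitly do, that additivity alone only yields a $p$-polynomial, and the $\F_Q$-homogeneity is what upgrades it to $Q$-linearized.
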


\begin{proof}
If $L$ is the zero polynomial, the result trivially holds. Suppose that $L$ is not the zero polynomial and let $d$ be its degree. Let $t$ be sufficiently large in a way that $Q^t>d$ and $L\in \F_{Q^t}[x]$. Therefore, $L$ induces an $\F_Q$-linear map on $\F_{Q^t}$. However, the number of $\F_{Q}$-linear maps from $\F_{Q^t}$ to itself equals $Q^{t^2}$, the number of $t\times t$ matrices with entries in $\F_{Q}$. This number coincides with the number of $Q$-linearized polynomials of degree at most $Q^{t-1}$, defined over $\F_{Q^t}$. Since $L\in \F_{Q^t}[x]$ and $d<Q^t$, it follows that $L$ is a $Q$-linearized polynomial. 
\end{proof}

\begin{lemma}\label{lem:b2}
Given two $Q$-linearized polynomials $L, M$ such that $L$ is separable, we have that $L$ divides $M$ if and only if there exists another $Q$-linearized polynomial $N$ such that $M(x)=N(L(x))$. 
%In particular, if $L$  is $q$-linearized and divides $x^{q^n}-x$, then there exists a $q$-linearized polynomial $N\in \F_{q^n}[x]$ such that 
%$$N(L(x))=L(N(x))=x^{q^n}-x.$$
\end{lemma}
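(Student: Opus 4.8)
The statement is an equivalence, and one direction is immediate: if $M(x)=N(L(x))$ for some $Q$-linearized polynomial $N$, then $N$ has vanishing constant term, so $x\mid N(x)$ in $\overline{\F}_p[x]$; substituting $x\mapsto L(x)$ yields $L(x)\mid N(L(x))=M(x)$. (This half does not use separability.) The plan for the converse is to regard the $Q$-linearized polynomials as a noncommutative ring under composition that admits a right-division algorithm, and to divide $M$ by $L$.

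Concretely, I would prove by induction on $m$, where $\deg M=Q^m$, that there exist $Q$-linearized polynomials $N$ and $R$ with $M(x)=N(L(x))+R(x)$ and either $R=0$ or $\deg R<\deg L$. Writing $\deg L=Q^\ell$ (and noting $L\neq 0$, since $L$ is separable), the inductive step when $m\geq\ell$ is to subtract from $M(x)$ the appropriate scalar multiple of $L(x)^{Q^{m-\ell}}$ that cancels the $x^{Q^m}$-term of $M(x)$: because $(\sum_i a_i x^{Q^i})^{Q^j}=\sum_i a_i^{Q^j}x^{Q^{i+j}}$, this multiple is itself $Q$-linearized, so the difference is a $Q$-linearized polynomial of strictly smaller degree and the induction hypothesis applies. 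Accumulating the monomials used along the way produces $N$, and the leftover is $R$.

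Given this division identity the argument finishes at once: $R(x)=M(x)-N(L(x))$ is divisible by $L$ — since $L\mid M$ by hypothesis and $L(x)\mid N(L(x))$ as above — but a nonzero multiple of $L$ has degree at least $\deg L$, contradicting $\deg R<\deg L$; hence $R=0$ and $M(x)=N(L(x))$. The one point needing care is that the division never leaves the class of $Q$-linearized polynomials, so that $N$ emerges as a polynomial of the asserted form and not merely an $\F_Q$-linear map; separability is used only through $L\neq 0$, which is exactly what legitimizes the leading-coefficient division. A more conceptual alternative, closer in spirit to Lemma~\ref{lem:b1}: since $L$ is a surjective self-map of $\overline{\F}_p$, set $N(y):=M(x)$ for any $x$ with $L(x)=y$; this is well defined because $L(x)=L(x')$ forces $x-x'\in\ker L\subseteq\ker M$ and $M$ is additive (separability giving $|\ker L|=\deg L$), it satisfies $M(x)=N(L(x))$, and one checks $N(y+z)=N(y)+N(z)$ and $N(by)=bN(y)$ for $b\in\F_Q$ directly from the surjectivity of $L$ and the corresponding identities for $L$ and $M$, so that Lemma~\ref{lem:b1} applies once one knows that $N$ is a polynomial. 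That last fact can be seen from the Galois extension $\overline{\F}_p(x)/\overline{\F}_p(L(x))$, whose group $\{x\mapsto x+v:v\in\ker L\}$ fixes $M(x)$, forcing $M(x)\in\overline{\F}_p(L(x))$, with surjectivity of $L$ excluding denominators; on this route the genuine obstacle is precisely promoting the pointwise $N$ to a polynomial.
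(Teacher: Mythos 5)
Your proof is correct, and your main argument takes a genuinely different route from the paper. The paper uses separability to write $L(x)=\prod_{v\in V}(x-v)$ over its root set $V$, observes that $L\mid M$ makes $M$ invariant under the translations $x\mapsto x+v$, $v\in V$, invokes Theorem 2.5 of~\cite{LR17} to conclude $M(x)=N(L(x))$ for \emph{some} polynomial $N$, and only then upgrades $N$ to a $Q$-linearized polynomial via the counting argument of Lemma~\ref{lem:b1}. You instead run a right-division algorithm in the composition ring of $Q$-linearized polynomials: subtracting $c\,L(x)^{Q^{m-\ell}}=N_1(L(x))$ with $N_1(x)=c\,x^{Q^{m-\ell}}$ kills the leading term while staying inside the class of $Q$-linearized polynomials, so induction yields $M(x)=N(L(x))+R(x)$ with $N$ linearized and $\deg R<\deg L$, and the divisibility hypothesis forces $R=0$. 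This is self-contained (no appeal to~\cite{LR17} or to Lemma~\ref{lem:b1}), it keeps the coefficients of $N$ in whatever field contains those of $L$ and $M$, and, as you note, it uses separability only through $L\neq 0$, so it actually proves a slightly stronger statement; what the paper's route buys is reuse of the translation-invariance and linear-map-counting machinery that it needs anyway for Theorem~\ref{thm:aux}. Your secondary ``conceptual'' sketch is essentially the paper's proof in disguise (the Galois argument for $\overline{\F}_p(x)/\overline{\F}_p(L(x))$ replacing the citation), and its one soft spot is the claim that surjectivity of $L$ ``excludes denominators'': the clean justification is that $M(x)\in\overline{\F}_p(L(x))\cap\overline{\F}_p[x]$ is integral over the integrally closed ring $\overline{\F}_p[L(x)]$, hence lies in it. Since your primary argument is complete, this does not affect correctness.
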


\begin{proof}
It suffices to prove the ``only if'' part since $N(L(x))$ is divisible by $L(x)$ whenever $N(0)=0$. Let $V$ be the set of roots of $L$, hence $L(x)=\prod_{v\in V}(x-v)$. If $L$ divides $M$, it follows that $M(v)=0$  for every $v\in V$. Since $M$ is $Q$-linearized, we obtain the following identity in $\overline{\F}_p[x]$ 
$$M(x+v)=M(x)+M(v)=M(x).$$ 
In other words, the polynomial $M$ is invariant by the translations $x\mapsto x+v, v\in V$. From Theorem 2.5 in~\cite{LR17}, it follows that $M(x)=N(L(x))$ for some polynomial $N$. So it remains to show that $N$ is $Q$-linearized. For this, pick $x_1, x_2 \in \overline{\F}_p$ arbitrary and let $a_1, a_2\in \overline{\F}_p$ be such that $L(a_i)=x_i$, hence 
$$0=M(a_1+a_2)-M(a_1)-M(a_2)=N(x_1+x_2)-N(x_1)-N(x_2).$$
Therefore, $N(x+a)=N(x)+N(a)$ for every $a\in \overline{\F}_p$. A similar argument entails that $N(b x) =b N(x)$ for every $b\in \F_Q$. From Lemma~\ref{lem:b1}, it follows that $N$ is $Q$-linearized.

%For the second statement, we observe that if 
%$$N(L(x))=x^{q^n}-x,$$
%then $L\in \F_{q^n}[x]$. In particular, since $L$ is $q$-linearized, it commutes with the polynomial $x^{q^n}-x$. Therefore $L(N(L(x)))=L(x^{q^n-x})=L^{q^n}-L$. Taking $L(x)=y$, we obtain the equality $L(N(y))=y^{q^n}-y$ concluding the proof.
\end{proof}

We obtain the following result.

\begin{theorem}\label{thm:aux}
Let $L\in \F_{Q^n}[x]$ be a $Q$-linearized polynomial dividing $x^{Q^n}-x$ and let $V\subseteq \F_{Q^n}$ be the set of its roots. For any polynomial $f\in \F_{Q^n}[x]$, the following are equivalent:

\begin{enumerate}[(i)]
\item $f(x+v)=f(x)+f(v)$ and $f(b v)=b f(v)$ for every $v\in V$ and $b\in \F_Q$;
\item there exists a polynomial $T\in \F_{Q^n}[x]$ with $T(0)=0$ and a $Q$-linearized polynomial $M\in \F_{Q^n}[x]$ of degree at most $\deg(L)-1$ such that $f(x)=T(L(x))+M(x)$.
\end{enumerate}
\end{theorem}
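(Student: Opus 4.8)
The plan is to prove the two implications separately, with the implication (ii)$\Rightarrow$(i) being the easy direction and (i)$\Rightarrow$(ii) being where the real work lies. For (ii)$\Rightarrow$(i): if $f(x)=T(L(x))+M(x)$ with $T(0)=0$ and $M$ a $Q$-linearized polynomial, then since $v\in V$ means $L(v)=0$, we get $f(v)=T(0)+M(v)=M(v)$; and for any $x$, using that $L$ is $Q$-linearized, $L(x+v)=L(x)+L(v)=L(x)$, so $f(x+v)=T(L(x))+M(x+v)=T(L(x))+M(x)+M(v)=f(x)+f(v)$. Similarly $L(bv)=bL(v)=0$ for $b\in\F_Q$ (since $L$ is $Q$-linearized and $v\in V$), and $M(bv)=bM(v)$, so $f(bv)=T(0)+M(bv)=bM(v)=bf(v)$. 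This is a few lines of direct computation.

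For the harder direction (i)$\Rightarrow$(ii), the idea is to mimic the proof of Lemma~\ref{lem:b2}, exploiting the translation-invariance of $f$ under the subgroup $V$. Condition (i) says precisely that $f(x+v)=f(x)+f(v)$ for every $v\in V$; since the $f(v)$ are constants, this means that for each $v\in V$ the polynomial $f(x+v)-f(x)$ is the constant $f(v)$. In particular $f$ is invariant under translations $x\mapsto x+v$ only up to an additive constant, so I cannot directly quote Theorem~2.5 of~\cite{LR17}. The fix is to first split off the ``linear-on-$V$'' part. Since $L$ splits completely and divides $x^{Q^n}-x$, $V$ is an $\F_Q$-subspace of $\F_{Q^n}$ of dimension $m:=\log_Q\deg(L)$; pick an $\F_Q$-basis $v_1,\dots,v_m$ of $V$ and let $M_0$ be the unique $Q$-linearized polynomial of degree at most $Q^{m-1}$ with $M_0(v_i)=f(v_i)$ for all $i$ (such $M_0$ exists by Lagrange-type interpolation over $Q$-linearized polynomials, equivalently because evaluation at a basis of $V$ gives a bijection between such polynomials and maps $V\to\F_{Q^n}$). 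By (i), $f$ restricted to $V$ is $\F_Q$-linear, so $M_0$ and $f$ agree on all of $V$. Now set $g(x):=f(x)-M_0(x)$. Then $g$ vanishes identically on $V$, and moreover $g(x+v)=f(x+v)-M_0(x+v)=f(x)+f(v)-M_0(x)-M_0(v)=g(x)$ for every $v\in V$, i.e.\ $g$ is genuinely invariant under all translations by elements of $V$.

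Now Theorem~2.5 of~\cite{LR17} applies to $g$: since $g$ is invariant under the translations $x\mapsto x+v$ with $v$ ranging over the roots of the separable polynomial $L$, there is a polynomial $T\in\F_{Q^n}[x]$ with $g(x)=T(L(x))$; and since $g(0)=f(0)-M_0(0)=0$ (as $0\in V$), we may take $T(0)=0$. Writing $M:=M_0$, which is $Q$-linearized of degree at most $Q^{m-1}=\deg(L)/Q<\deg(L)$, gives $f(x)=g(x)+M_0(x)=T(L(x))+M(x)$, which is exactly (ii). The main obstacle is the observation that (i) only yields translation-invariance up to a constant, forcing the interpolation step that produces $M_0$; once one recognizes that $f|_V$ is $\F_Q$-linear and hence realized by a $Q$-linearized polynomial of small degree, the rest follows by the same route as Lemma~\ref{lem:b2}. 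One should double-check that the interpolation is genuinely possible, i.e.\ that the $Q$-linearized polynomials of degree at most $Q^{m-1}$ over $\F_{Q^n}$ surject onto the $\F_Q$-linear maps $V\to\F_{Q^n}$; this follows by a dimension count identical in spirit to the one in Lemma~\ref{lem:b1}, or by noting that $\{x,x^Q,\dots,x^{Q^{m-1}}\}$ is $\F_{Q^n}$-linearly independent as functions on $V$ because the Moore determinant $\det(v_i^{Q^{j}})_{0\le j\le m-1}$ is nonzero for a basis $v_1,\dots,v_m$.
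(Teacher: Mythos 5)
Your proof is correct, but the hard direction (i)$\Rightarrow$(ii) follows a genuinely different route from the paper. The paper expands $f$ in base $L$, writing $f(x)=\sum_{i=0}^{e}Q_i(x)L(x)^i$ with $\deg(Q_i)<\deg(L)$, and uses the uniqueness of this expansion under the substitution $x\mapsto x+v$ to force $Q_i$ to be constant for $i\ge 1$ and $Q_0$ to satisfy the additivity and $\F_Q$-homogeneity identities; it then invokes Lemma~\ref{lem:b1} to conclude that $Q_0$ is $Q$-linearized, so the translation-invariance theorem of \cite{LR17} never enters the proof of Theorem~\ref{thm:aux} itself (the paper only uses it inside Lemma~\ref{lem:b2}). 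You instead neutralize the additive constants first: you interpolate a $Q$-linearized $M_0$ of degree at most $Q^{m-1}$ agreeing with $f$ on an $\F_Q$-basis of $V$ (hence, by the $\F_Q$-linearity of $f|_V$ coming from (i), on all of $V$), so that $g=f-M_0$ is exactly invariant under translations by $V$, and then quote Theorem~2.5 of \cite{LR17} to get $g=T(L(x))$. Your reduction is sound: the Moore-matrix (or dimension-count) argument for the interpolation is standard, $g(x+v)=g(x)$ does hold as a polynomial identity because both $f(x+v)=f(x)+f(v)$ and $M_0(x+v)=M_0(x)+M_0(v)$ are polynomial identities, and $T(0)=g(0)=0$ since $0\in V$ and $L(0)=0$. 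What your route buys is a shorter argument that bypasses both the base-$L$ uniqueness bookkeeping and Lemma~\ref{lem:b1}, and it delivers the marginally sharper bound $\deg(M)\le \deg(L)/Q$ directly (though this also follows from the paper's version, since any $Q$-linearized $M$ of degree below $\deg(L)=Q^m$ automatically has degree at most $Q^{m-1}$); the cost is reliance on the external invariance theorem of \cite{LR17} and on the Moore interpolation fact, whereas the paper's argument is self-contained apart from Lemma~\ref{lem:b1}. The only cosmetic point to note is that if $L$ is not assumed monic one should rescale $T$ when passing from the invariant $\prod_{v\in V}(x-v)$ to $L(x)$, an issue the paper glosses over as well.
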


\begin{proof}
The direction (ii)$\Rightarrow$(i) follows directly by calculations. For the direction (i)$\Rightarrow$(ii), suppose that $f(x+v)=f(x)+f(v)$ and $f(b v)=b f(v)$ for every $v\in V$ and $b\in \F_Q$.  Using Euclidean algorithm, we can easily prove that $f$ can be written uniquely as $f(x)=\sum_{i=0}^eQ_i(x)\cdot L(x)^i$, where $\deg(Q_i)<\deg(L)$ (the expansion of $f$ in basis $L$). From hypothesis, for $v\in V$, we have that
\begin{eqnarray*}
0&=&f(x+v)-f(x)-f(v)\\
&=&\sum_{i=0}^eQ_i(x+v)L(x+v)^i-\sum_{i=0}^eQ_i(x)L(x)^i-\sum_{i=0}^eQ_i(v)L(v)^i.
\end{eqnarray*}
Since $L$ is $Q$-linearized and $L(v)=0$, the latter implies that
$$0=\sum_{i=1}^e(Q_i(x+v)-Q_i(x))L(x)^i+(Q_0(x+v)-Q_0(x)-Q_0(v)).$$
Since the representation in basis $L$ is unique, we necessarily have that $Q_i(x+v)=Q_i(x)$ for any $1\le i\le e$ and 
$$Q_0(x+v)-Q_0(x)-Q_0(v)=0.$$
We claim that $Q_i(x)$ is a constant polynomial if $0<i\le e$. In fact, from  the above, we obtain that $Q_i(0)=Q_i(v)$ for every $v\in V$ and every $0<i\le e$. Hence $Q_i(x)-Q_i(0)$ vanishes at each element of $V$, a set of cardinality $\deg(L)$. But $\deg(Q_i)<\deg(L)$, and so $Q_i(x)$ equals the constant polynomial $Q_i(0)$. In particular, we have that 
$$f(x)=T(L(x))+Q_0(x).$$
with $T(x)=\sum_{i=1}^eQ_i(0)x^i$. Since the polynomial $M_t(x):=Q_0(t+x)-Q_0(t)-Q_0(x)\in \F_{Q^n}(t)[x]$ vanishes at the elements of $V$ and has degree at most $\deg(L)-1<|V|$, it follows that $M_t$ is the zero polynomial. Therefore, $Q_0(x+a)=Q_0(x)+Q_0(a)$ for every $a\in \overline{\F}_Q$. Since  $f(b v)=b f(v)$ for every $v\in V$ and every $b\in \F_Q$, we obtain that $Q_0(bv)=bQ_0(v)$. By a similar argument, we conclude that $Q_0(bx)=bQ_0(x)$ for every $b\in \F_Q$. From Lemma~\ref{lem:b1}, $Q_0$ is a $Q$-linearized polynomial. 
\end{proof}

\begin{remark}\label{rem:aux}
If $Q=p$, the condition $f(bv)=bf(v), b\in \F_Q$ in Theorem~\ref{thm:aux} can be removed, since it is already implied by the condition $f(x+v)=f(x)+f(v)$.
\end{remark}

\section{The additive index of a polynomial}

 Let $p$ be a prime number and $q=p^n$. In this section we explore the decomposition of polynomials in $\F_q[x]$ as $f(L(x)) + M(x)$, where $L(x), M(x)$ are $p$-linearized polynomials. First of all we introduce the following definition. 

%Let $q$ be a power of a prime $p$. We have the following definition. 
\begin{definition}
Let $L\in \fq[x]$ be a {\em subspace polynomial} over $\F_q$, i.e., a monic $p$-linearized polynomial dividing $x^{q}-x$. A polynomial $P\in \fq[x]$ is {\em $L$-decomposable} if it can be written as $P(x)=f(L(x))+M(x)$ in a way that $f, M\in \fq[x]$ and $M$ is a $p$-linearized polynomial with $\deg(M)<\deg(L)$. 
\end{definition}

We observe that every polynomial $P\in \F_q[x]$ is $L$-decomposable for $L(x)=x$; we take $P=f$ and $M$ is the zero polynomial. This is the trivial decomposition of $P$. A polynomial may be $L$-decomposable for various $L$. In fact, if $P$ is $p$-affine, then $P$ is $L$-decomposable for every monic $p$-linearized polynomial $L$ that divides $x^q-x$: this is a direct consequence of Theorem~\ref{thm:aux} since, in this case, $P(x)-P(0)$ is a $p$-linearized polynomial. The following theorem entails that for any polynomial $P$, we have uniqueness if we consider the $L$'s of maximal degree.

\begin{theorem}\label{thm:index}
For $P\in \F_q[x]$, set $P_0(x)=P(x)-P(0)$ and let $V=V(P, q)$ be the set of all elements $y\in \F_q$ yielding the following identity in $\F_q[x]$:
$$P_0(x+y)-P_0(x)-P_0(y)=0.$$
If $\mathcal L=\mathcal L (P, q)\in \F_q[x]$ is defined by $\mathcal L(x)=\prod_{v\in V}(x-v)$, then the following hold:

\begin{enumerate}[(i)]
\item $\mathcal L$ is a {\em subspace polynomial} over $\F_q$, that is, a monic $p$-linearized polynomial that divides $x^q-x$;
\item For any monic $p$-linearized polynomial $L\in \F_q[x]$ dividing $x^{q}-x$, $P$ is $L$-decomposable if and only if $L$ divides $\mathcal L$. 
\end{enumerate}
\end{theorem}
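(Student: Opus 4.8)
The plan is to establish (i) by showing that $V$ is an $\F_p$-subspace of $\F_q$, and then derive (ii) by reducing to Theorem~\ref{thm:aux}. For (i), first I would note that $0\in V$ trivially, and that $V$ is closed under addition: if $y_1,y_2\in V$, then expanding $P_0(x+y_1+y_2)-P_0(x)-P_0(y_1+y_2)$ and using the vanishing identities for $y_1$ and for $y_2$ (applied with suitably shifted arguments, e.g. $P_0((x+y_2)+y_1)=P_0(x+y_2)+P_0(y_1)$ and $P_0(x+y_2)=P_0(x)+P_0(y_2)$, together with $P_0(y_1+y_2)=P_0(y_1)+P_0(y_2)$) yields $P_0(x+y_1+y_2)-P_0(x)-P_0(y_1+y_2)=0$, so $y_1+y_2\in V$. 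Closure under negation follows similarly (or from closure under addition plus finiteness). Hence $V$ is an additive subgroup of $(\F_q,+)$, so it is an $\F_p$-subspace. It is a standard fact that for any $\F_p$-subspace $V\subseteq \F_q$, the polynomial $\prod_{v\in V}(x-v)$ is a monic $p$-linearized (indeed $\F_p$-linearized) polynomial, and since its roots all lie in $\F_q$ it divides $x^q-x$; this gives $\mathcal L$ being a subspace polynomial over $\F_q$. This part I expect to be routine.

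For (ii), I would apply Theorem~\ref{thm:aux} with $Q=p$ and with the subspace polynomial $\mathcal L$ in the role of $L$, whose root set is exactly $V$ by construction. By Remark~\ref{rem:aux}, condition (i) of Theorem~\ref{thm:aux} for $f=P_0$ reads simply ``$P_0(x+v)=P_0(x)+P_0(v)$ for every $v\in V$'', which holds by the very definition of $V$. Therefore Theorem~\ref{thm:aux}(ii) gives $P_0(x)=T(\mathcal L(x))+M(x)$ with $T(0)=0$ and $M$ a $p$-linearized polynomial of degree at most $\deg(\mathcal L)-1$; adding back $P(0)$ and absorbing it into the ``$f$'' part (i.e. taking $f(x)=T(x)+P(0)$) shows that $P$ is $\mathcal L$-decomposable.

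It remains to prove the ``if and only if'' in (ii) for a general monic $p$-linearized $L\mid x^q-x$. For the ``if'' direction, suppose $L$ divides $\mathcal L$. Since $\mathcal L$ is separable (its roots are the distinct elements of $V$), Lemma~\ref{lem:b2} gives a $p$-linearized polynomial $N$ with $\mathcal L(x)=N(L(x))$; substituting into the decomposition $P(x)=f(\mathcal L(x))+M(x)+P(0)$ from the previous paragraph, we get $P(x)=\big(f\circ N\big)(L(x))+M(x)+P(0)$, and here $\deg(M)<\deg(L)$ is \emph{not} automatic, so this needs care — I would instead argue directly: any $v$ with $L(v)=0$ satisfies $\mathcal L(v)=N(L(v))=N(0)=0$, hence $v\in V$, so $L$-decomposability follows from Theorem~\ref{thm:aux} applied with $L$ in place of $\mathcal L$ (its root set, a subset of $V$, satisfies the hypothesis of Theorem~\ref{thm:aux}(i) by Remark~\ref{rem:aux}). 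For the ``only if'' direction, suppose $P(x)=f(L(x))+M(x)$ with $M$ $p$-linearized of degree $<\deg(L)$. Then $P_0(x)=P(x)-P(0)=f(L(x))-f(0)+M(x)$, and one checks that for every root $w$ of $L$ we have $P_0(x+w)-P_0(x)-P_0(w)=\big(f(L(x))-f(L(x))\big)+\big(M(x+w)-M(x)-M(w)\big)-\big(f(0)-f(0)\big)$; the $M$-term vanishes since $M$ is $p$-linearized, hence $w\in V$, so every root of the separable polynomial $L$ lies in $V$, giving $L\mid \mathcal L$. The main obstacle is being careful in the ``if'' direction not to claim a valid $L$-decomposition by naive substitution (degree bound on $M$ can fail); routing everything through Theorem~\ref{thm:aux} and the characterization of $V$ via root sets is the clean way to avoid this.
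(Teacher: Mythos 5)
Your proposal is correct and follows essentially the same route as the paper: part (i) via closure of $V$ under addition (using the polynomial identity shifted/evaluated appropriately) plus the standard fact that the vanishing polynomial of an $\F_p$-subspace of $\F_q$ is a monic $p$-linearized divisor of $x^q-x$, and part (ii) by invoking Theorem~\ref{thm:aux} with $Q=p$ together with Remark~\ref{rem:aux}. The paper leaves (ii) as a one-line deduction, and your spelled-out version (applying Theorem~\ref{thm:aux} to the root set of $L$ itself rather than naively substituting $\mathcal L=N(L)$, and checking directly that roots of $L$ lie in $V$ for the converse) is exactly the intended argument.
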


\begin{proof}
We prove items (i) and (ii) separately.

\begin{enumerate}[(i)]
\item It is a routine exercise to prove that the vanishing polynomial of an $\F_p$-vector space contained in $\F_q$ is a $p$-linearized polynomial. So it suffices to prove that $V$ is an $\F_p$-vector space. Clearly $0\in V$, so we only need to prove that $V$ is closed under addition. If $u, v\in V$, then
\begin{equation}\label{eq:sum}P_0(x+u+v)=P_0(x+u)+P_0(v)=P_0(x)+P_0(u)+P_0(v).\end{equation}
Taking $x=0$ in Eq.~\eqref{eq:sum}, we obtain that $P_0(u+v)=P_0(u)+P_0(v)$. Hence Eq~\ref{eq:sum} implies that $P_0(x+u+v)=P_0(x)+P_0(u+v)$ and then $u+v\in V$.
\item Since $\mathcal L$ is the vanishing polynomial of $V$, this item follows from Theorem~\ref{thm:aux} and Remark~\ref{rem:aux}.
\end{enumerate}
\end{proof}

Theorem~\ref{thm:index} motivates us to introduce the following definition.

\begin{definition}
Write $q=p^n$. For a polynomial $P\in \F_q[x]$, let $\mathcal L\in \F_q[x]$ be the $p$-linearized polynomial as in Theorem~\ref{thm:index}.  In other words, $P$ is $\mathcal L$-decomposable where $\mathcal L$ is of the maximal degree. If $\deg({\mathcal L})=p^{n-k}$, then we say that  the polynomial $P$ has {\em additive index} $k$ over $\F_q$.
\end{definition}

From previous observation, any $p$-affine polynomial $P(x)\in \F_q[x]$ is $L$-decomposable for $L(x)=x^q-x$. In particular, such polynomials have additive index $0$: the converse is also true under the condition $\deg(P)<q$. Theorem ~\ref{thm:index} also provides a simple way of finding the additive index of an arbitrary polynomial over $\F_q$ via the GCD of certain polynomials.

\begin{corollary}
For a polynomial $P\in \F_q[x]$ of degree $d$, let $P_0(x)=P(x)-P(0)$ and consider the bivariate polynomial
$$P_0(x+y)-P_0(x)-P_0(y)=\sum_{i=1}^{d-1}F_i(y)x^i.$$

If $\mathcal L$ is as in Theorem~\ref{thm:index}, then 
$$\mathcal L(x)=\gcd(F_1(x), \ldots, F_{d-1}(x), x^q-x).$$
\end{corollary}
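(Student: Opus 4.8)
The plan is to identify $\mathcal L$ with the claimed gcd by showing both polynomials have the same set of roots in $\overline{\F}_p$ and are monic $p$-linearized. By Theorem~\ref{thm:index}, $\mathcal L(x) = \prod_{v \in V}(x-v)$ where $V$ is the set of $y \in \F_q$ for which $P_0(x+y)-P_0(x)-P_0(y)$ vanishes identically in $x$. Writing $P_0(x+y)-P_0(x)-P_0(y) = \sum_{i=1}^{d-1} F_i(y) x^i$, an element $y \in \F_q$ lies in $V$ if and only if $F_1(y) = \cdots = F_{d-1}(y) = 0$, i.e., if and only if $y$ is a common root of $F_1, \ldots, F_{d-1}$ that also lies in $\F_q$, equivalently a root of $\gcd(F_1(x), \ldots, F_{d-1}(x), x^q - x)$ (since the roots of $x^q-x$ in $\overline{\F}_p$ are exactly the elements of $\F_q$).

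The next step is to argue this determines the gcd completely. Set $G(x) = \gcd(F_1(x), \ldots, F_{d-1}(x), x^q-x)$. Since $x^q - x$ is separable, $G$ is separable, so $G$ is (up to the leading coefficient) the product $\prod_{v \in V}(x - v)$ over its roots $v$, which by the previous paragraph is exactly $V$. Hence $G$ and $\mathcal L$ have the same roots, both are separable, and normalizing $G$ to be monic gives $G(x) = \prod_{v \in V}(x-v) = \mathcal L(x)$. The fact that this monic gcd is automatically $p$-linearized and divides $x^q - x$ is already guaranteed by Theorem~\ref{thm:index}(i); alternatively it follows since $G \mid x^q - x$ and $G$ is the vanishing polynomial of the $\F_p$-subspace $V$.

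One technical point to handle carefully: the gcd is a priori only defined up to a scalar in $\F_q^\times$, so the statement should be read as ``the monic gcd,'' and I would either make that explicit or note that normalizing to monic is what is intended — consistent with $\mathcal L$ being monic by definition. A second minor point is the degenerate case where $P_0$ is itself $p$-linearized (additive index $0$): then all $F_i$ are identically zero, $\gcd(0, \ldots, 0, x^q-x) = x^q - x$, and indeed $\mathcal L(x) = x^q - x$, so the formula still holds with the convention $\gcd(0, f) = f$. I do not anticipate a serious obstacle here; the only care needed is in phrasing the gcd normalization and in checking that membership in $V$ is literally equivalent to simultaneous vanishing of the coefficient polynomials $F_i$, which is immediate from the uniqueness of the coefficients of a polynomial in $\F_q[x]$.
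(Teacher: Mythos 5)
Your proof is correct and follows essentially the same route as the paper: identify $V$ with the common roots in $\F_q$ of the $F_i$, note the gcd with $x^q-x$ is separable, and conclude it equals $\mathcal L$ by comparing root sets. The extra remarks on monic normalization and the degenerate case where all $F_i$ vanish are fine but not needed beyond the paper's argument.
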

\begin{proof}
If $V=V(P, q)$ is as in Theorem~\ref{thm:index}, we observe that $y\in V$ if and only if $y\in \F_q$ and $F_i(y)=0$ for every $1\le i\le d-1$. Equivalently, $y$ is a root of the polynomial $\gcd(F_1(x), \ldots, F_{d-1}(x), x^q-x)$, which is separable since $x^q-x$ is separable. The result follows from the definition of $\mathcal L$.
\end{proof}
From the previous corollary, after computing the $p$-linearized polynomial $\mathcal L$ associated to $P\in \F_q[x]$, we just need to write $P$  (more precisely,  $P(x)-P(0)$) in basis $\mathcal L$.  Theorem~\ref{thm:aux} guarantees that we obtain an expansion 
$$P(x)=f(\mathcal L(x))+M(x),$$
where  $\mathcal L(x), M(x)$ are $p$-linearized polynomials over $\F_{q}$, $\deg(M)<\deg(\mathcal L)$ and $\mathcal L(x)$ splits completely over $\F_{q}$.

%
%We describe the following way to find the additive index of any given $P(x) \in \F_{q}[x]$. Suppose $M(x)$ is the sum of
%the constant term and all terms  consisting of exponents that are powers of $p$.  If $P(x) = M(x)$, then this is the trivial case. Otherwise,  we want to write  $P(x) - M(x)$ as $f(L(x))$ such that $L(x)$ is $p$-linearized polynomial of the largest degree.   Then  we compute $d(x) := gcd(P(x) - M(x), x^{p^n} -x)$ and $L(x)$ is a monic $p$-linearized factor of $d(x)$ with the highest degree.  This is well defined because $L(x) = x \mid d(x)$ always hold. 

%\subsection{PP's from the additive index}

\section{On $M$-affine mappings}
Here $L$ denotes a monic $p$-linearized polynomial dividing $x^q-x$. We observe that the set $U_L:=\{z\in\overline{\F}_q\,|\, L(z)=0\}\subseteq \fq$ is an $\F_p$-vector space of codimension $k$, where $p^{n-k}=\deg(L)$. In particular, $(U_L, +)$ is a subgroup of the abelian group $(\F_q, +)$.

\begin{definition}
An $L$-coset in $\F_q$ is any set of the form $a+U_L$ with $a\in \F_q$. 
\end{definition}

If $P\in \F_q[x]$ writes as $P(x)=f(L(x))+M(x)\in \F_q[x]$ with $M$ a $p$-linearized polynomial, we observe that the map $c\mapsto P(c)$ takes a simpler form on each $L$-coset. In fact, for $a\in \F_q$ and $u\in U_L$, we have that
\begin{equation}\label{eq:coset}P(a+u)=f(L(a+u))+M(a+u)=f(L(a))+M(a+u)=P(a)+M(u).\end{equation}

\begin{definition}
Let $U_0\subseteq \F_{q}$ be an $\F_p$-vector space of dimension $n-k$. Then $\F_{q}$ can be partitioned into $U_0, \ldots, U_{p^{k}-1}$, where each $U_i$ is of the form $\zeta_i + U_0$ with $\zeta_i \in \F_{q}$.  
For a $p$-linearized polynomial $M\in \F_q[x]$ and a sequence $\{a_i\}_{ 0\leq i \leq p^{k}-1}$ in $\F_q$,  we can define an {\em $M$-affine mapping  $P$ of index $k$}  with the subspace $U_0$ by 

\begin{equation}\label{CycloMappingDef}
P(x) =
\left\{
\begin{array}{ll}
M(x) + a_0,   &   \mbox{if} ~x \in U_0, \\
\vdots & \vdots \\
M(x) + a_i,   &   \mbox{if} ~x \in U_i,\\
\vdots & \vdots \\
M(x) + a_{p^{k}-1}  &   \mbox{if} ~x \in U_{p^{k}-1}.\\
\end{array}
\right.
\end{equation}

\end{definition}

\begin{remark}\label{rem:cyc-index} Eq.~\eqref{eq:coset} entails that  if  $P(x)=f(L(x))+M(x)$  is a polynomial with additive index $k$, i.e.,  $\deg(L)=p^{n-k}$, then $P$ induces an $M$-affine mapping of index $k$ and subspace $U_L=\{z\in \overline{\F}_q \mid  L(z)=0\}\subseteq \F_q$.  Conversely, if $P$ is an $M$-affine mapping of index $k$ with subspace $U_0$, then the polynomial representation of $P$ is $L$-decomposable, where $L(x)=\prod_{u\in U_0}(x-u)$ . We note that a mapping could be represented by $M$-affine mappings with various indices (if we take different subspaces), but the smallest index gives the additive index of the corresponding polynomial.
\end{remark}
%\begin{lemma}
%Let $k$ be a positive integer and let $M\in \F_q[x]$ be a linearized polynomial of degree at most $p^{k-1}$. If $P\in \F_q[x]$ with $\deg(P)<q$ is the polynomial representation of an $M$-linearized cyclotomic mapping  $P(x)$ of additive index $n-k$ with the subspace $U_0$, then $P(x)=f(L(x))+M(x)$ for some $f\in \F_q[x]$, where $L(x)=\prod_{u\in U_0}(x-u)$. In particular, $P$ is $L$-decomposable and so it has additive index at least $n-k$.
%\end{lemma}
%\begin{proof}
%Set $P_0(x)=P(x)-P(0)$. Since $\deg(P)<q$, from Theorem~\ref{thm:aux}, it suffices to prove that $P_0(y+u)=P_0(y)+P_0(u)$ for every $u\in U_0$ and $y\in \F_q$. Let $a_1, \ldots, a_{p^{n-k}}$ be such that $P(x)=M(x)+a_i$ if $x\in U_i$. In particular, $P(0)=M(0)+a_0=a_0$.
%For $y\in U_i$ and $u\in U_0$, we have that $P(y+u)=M(y+u)+a_i$ and $P(y)=M(y)+a_i$. Therefore, we obtain that 
%\begin{align*}
%P_0(y+u)=P(y+u)-P(0)=M(y+u)+a_i-P(0) & =M(u)+M(y)+a_i-P(0)  \\{} = M(u)+P_0(y)=P_0(u)+P_0(y),\end{align*}
%where in the last equality we used the fact that $P_0(u)=P(u)-P(0)=M(u)+a_0-a_0$.
%\end{proof}

%Claim: provided that $a_0=0$ ($0$ is the coset $V_g$), the polynomial representation $P$ of $f$ is a polynomial  such that $P(0)=0$ and has $n$-linearized rank $g$.

In the rest of  this section we provide some applications of the additive index of polynomials over finite fields. These applications concern about some classical problems in the theory: the value set sizes of polynomials  and character sums with polynomial arguments. 
Throughout this section, $L$ and $M$ usually denote $p$-linearized polynomials, where $L(x)$ is monic and  divides $x^q-x$.

\subsection{Value set of polynomials through the additive index}

For $q=p^n$ and $P \in \fq[x]$, $V_P=\{P(y)\,|\, y\in \F_q\}$ denotes the value set of $P$ over $\F_q$. We obtain the following result. 
%For any polynomial $g(x) = \sum_{i=0}^k a_i x^i$, we call $G(x) = \sum_{i=0}^k a_i x^{p^i}$ the $p$-associated linearized polynomial.  Conversely, for any $p$-associated linearized polynomial $G(x) = \sum_{i=0}^k a_i x^{p^i}$, we call $\tilde(G)(x) =  \sum_{i=0}^k a_i x^i$ the associated polynomials of $G(x)$. 

%\textcolor{red}{Lucas: I have changed the proposition again, it was imprecise with the gcd, since we cannot use associated polynomials (the coefficients are not in $\F_p$!). Now I believe it is accurate.}

\begin{theorem}\label{prop:value-set}
Let $L$ be a monic $p$-linearized polynomial that divides $x^q-x$, set $p^{n-k}=\deg(L)$ and $U_L=\{z\in\overline{\F}_q\,|\, L(z)=0\}$. If $M$ is a $p$-linearized polynomial and $P(x)=f(L(x))+M(x)\in \F_q[x]$, then the value set size $|V_P|$  of $P$ over $\F_q$ satisfies
$$|V_P|=c\cdot \frac{p^{n-k}}{\deg(\gcd(L, M))},$$
where $c$ is the number of distinct cosets representatives in $\{P(a)\,|\, a\in \F_q/U_L\}$ for the $\F_p$-vector space $M(U_L)$. 
\end{theorem}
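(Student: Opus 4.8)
The plan is to analyze the map $c \mapsto P(c)$ by exploiting the coset structure provided by Eq.~\eqref{eq:coset}. The key observation is that $P$ is constant on the fibers of a natural equivalence relation, and counting $|V_P|$ amounts to counting those fibers. First I would fix coset representatives $a_0, \dots, a_{p^k-1}$ for $\F_q/U_L$, so that by Eq.~\eqref{eq:coset} the image $P(a_i + U_L)$ equals the affine set $P(a_i) + M(U_L)$. Since $M$ is $p$-linearized, $M(U_L)$ is an $\F_p$-subspace of $\F_q$; call it $W$. Thus $V_P = \bigcup_{i} \bigl(P(a_i) + W\bigr)$, a union of $W$-cosets. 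Two such cosets $P(a_i)+W$ and $P(a_j)+W$ either coincide or are disjoint, so $|V_P| = c' \cdot |W|$, where $c'$ is the number of \emph{distinct} cosets among $\{P(a_i)+W : i\}$. This $c'$ is exactly the quantity called $c$ in the statement (the number of distinct coset representatives in $\{P(a)\,|\,a\in\F_q/U_L\}$ for the vector space $M(U_L)$).

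It then remains to compute $|W| = |M(U_L)|$ and check it equals $p^{n-k}/\deg(\gcd(L,M))$. Here I would use the rank–nullity theorem for the $\F_p$-linear map $M$ restricted to $U_L$: $|M(U_L)| = |U_L| / |\ker(M|_{U_L})|$, and $|U_L| = \deg(L) = p^{n-k}$. The kernel $\ker(M|_{U_L})$ is the set of common roots of $L$ and $M$ lying in $U_L$; since $L$ is separable (it divides $x^q - x$), all its roots are simple and lie in $\F_q$, and the common roots of $L$ and $M$ are precisely the roots of $\gcd(L, M)$. A short argument — essentially Lemma~\ref{lem:b2} applied to the separable $Q=p$-linearized polynomial $\gcd(L,M)$, or just the observation that $\gcd$ of two $p$-linearized polynomials over $\F_q$ with $L$ separable is again a separable $p$-linearized polynomial — shows $\gcd(L,M)$ is a subspace polynomial whose set of roots is exactly $U_L \cap \ker(M)$. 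Hence $|\ker(M|_{U_L})| = \deg(\gcd(L,M))$ and therefore $|M(U_L)| = p^{n-k}/\deg(\gcd(L,M))$. Combining, $|V_P| = c \cdot p^{n-k}/\deg(\gcd(L,M))$.

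I would double-check the one subtle point, namely that $\gcd(L, M)$ is indeed $p$-linearized and separable so that "$\deg$" correctly counts the roots. Separability is immediate since $\gcd(L,M) \mid L \mid x^q - x$. For the $p$-linearized claim: the set of common roots of $L$ and $M$ in $\overline{\F}_p$ is the intersection of two $\F_p$-subspaces (the root sets of the $p$-linearized polynomials $L$ and $M$), hence an $\F_p$-subspace, and its vanishing polynomial is $p$-linearized and divides both $L$ and $M$; being their common monic divisor of maximal degree it equals $\gcd(L,M)$. This also confirms $\ker(M|_{U_L}) = U_L \cap \ker(M)$ has cardinality exactly $\deg(\gcd(L,M))$.

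The main obstacle, modest as it is, is bookkeeping the distinction between $c$ (counting distinct \emph{images} of cosets, i.e. distinct $W$-cosets $P(a_i)+W$) and the naive count $p^k$ of $L$-cosets: collapsing happens precisely when $P(a_i) - P(a_j) \in M(U_L)$ for $i \neq j$, and one should make sure the definition of $c$ in the statement matches this. Everything else is the rank–nullity computation above.
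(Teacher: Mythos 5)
Your proposal is correct and follows essentially the same route as the paper: use Eq.~\eqref{eq:coset} to see that $P$ sends each $L$-coset $a+U_L$ onto the $M(U_L)$-coset $P(a)+M(U_L)$, count the distinct such cosets to get $c$, and apply the Rank--Nullity Theorem to the $\F_p$-linear map $M|_{U_L}$ to get $|M(U_L)|=p^{n-k}/\deg(\gcd(L,M))$. Your extra verification that $\gcd(L,M)$ is a separable subspace polynomial whose root set is exactly $U_L\cap\ker(M)$ is a detail the paper leaves implicit, and it is argued correctly.
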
 
%\commS{I changed this result}

\begin{proof}
If $P(x)=f(L(x))+M(x)\in \F_q[x]$ with $M$ a $p$-linearized polynomial,  then  $P$ is an $M$-affine mapping of additive index $k$ with  the subspace $U_0= U_L$.  In particuar, since $L$ is separable, the Rank-Nullity Theorem entails that $M$ maps $U_L$ onto an $\F_p$-vector space of cardinality $\frac{p^{n-k}}{\deg(\gcd(L, M))}$. Moreover, $P$ maps each coset $a+U_L$ onto $P(a)+M(U_L)$. The proof is complete.
\end{proof}

A polynomial $P \in \F_{q}[x]$ is a permutation polynomial (PP) if $|V_P|=q$.   It is a well known result due to  Wan  \cite{Wan}  that  if $P$ is not a PP then 
$|V_P| <  q - \frac{q-1}{d}$, where $d$ is the degree of the polynomial $P$.  Later on,  Mullen, Wan, and Wang \cite{MWW14} extended this bound  in terms of the  (mulitiplicative) index of the polynomials.  Namely, if $P$ is not a PP then 
$|V_P| <  q - \frac{q-1}{\ell}$, where $\ell$ is  the multiplicative index of $P$.  The following result is the additive analog of the above result.  Obviously,  if $P(x)=f(L(x))+M(x)$  is a PP of $\F_{q}$ with additive index $n-k$, then $\gcd(L(x), M(x)) $ must be a PP;  without loss of generality, we can assume that $\gcd(L(x), M(x) =x$. 
% For more results on permutation polynomials, we refer the reader to a separate paper.

\begin{corollary}
Let $q=p^n$ and $P(x)=f(L(x))+M(x)$  be a polynomial over
$\F_{q}$ with additive index $k$.
If  $\gcd(L(x), M(x)) =x$ and $|V_P| > p^n - p^{n-k} $, then $P(x)$ is a PP of $\F_{q}$. 
\end{corollary}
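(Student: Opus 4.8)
The plan is to read off the value set size from Theorem~\ref{prop:value-set} and then pin down the parameter $c$ by a counting argument. Write $q=p^n$, $\deg(L)=p^{n-k}$, $U_L=\{z\in\overline{\F}_q\mid L(z)=0\}$, and recall that $P$ is $\mathcal L$-decomposable with $\deg(\mathcal L)=p^{n-k}$ so we may take $L=\mathcal L$. Since by hypothesis $\gcd(L,M)=x$ has degree $1=p^0$, Theorem~\ref{prop:value-set} gives
$$|V_P|=c\cdot\frac{p^{n-k}}{\deg(\gcd(L,M))}=c\cdot p^{n-k},$$
where $c$ is the number of distinct cosets of the $\F_p$-vector space $M(U_L)$ among the values $\{P(a)\mid a\in \F_q/U_L\}$.

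Next I would bound $c$ from above. Because $L$ is separable and $\gcd(L,M)=x$, the Rank--Nullity argument already used in the proof of Theorem~\ref{prop:value-set} shows that $M$ restricts to an injection on $U_L$, so $|M(U_L)|=|U_L|=p^{n-k}$; equivalently, $M(U_L)$ has exactly $[\F_q:M(U_L)]=p^{k}$ cosets in the additive group $(\F_q,+)$. Since $c$ counts a set of distinct such cosets, we get $c\le p^{k}$ (one could equally well observe that $c$ is at most the number $p^k$ of coset representatives $a\in\F_q/U_L$). In particular $|V_P|=c\cdot p^{n-k}\le p^n=q$, which is just the trivial bound.

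Finally, I would combine the bound with the hypothesis. Writing $p^n-p^{n-k}=p^{n-k}(p^k-1)$, the assumption $|V_P|>p^n-p^{n-k}$ becomes $c\cdot p^{n-k}>p^{n-k}(p^k-1)$, i.e. $c>p^k-1$, hence $c=p^k$ by the previous paragraph. Therefore $|V_P|=p^k\cdot p^{n-k}=p^n=q$, so $P$ is a permutation polynomial of $\F_q$. There is no genuine obstacle in this argument: the whole statement is an immediate consequence of Theorem~\ref{prop:value-set} once $c$ is squeezed between $p^k-1<c$ and $c\le p^k$; the only point deserving an explicit line is the inequality $c\le p^k$, which rests on the injectivity of $M$ on $U_L$ guaranteed by $\gcd(L,M)=x$.
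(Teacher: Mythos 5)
Your proof is correct and follows exactly the route the paper intends: the corollary is an immediate consequence of Theorem~\ref{prop:value-set}, since with $\deg(\gcd(L,M))=1$ the value set size is $c\cdot p^{n-k}$ with $c\le p^k$, and the hypothesis forces $c=p^k$, i.e.\ $|V_P|=q$. The only quibble is cosmetic: the bound $c\le p^k$ already follows from there being only $p^k$ cosets of $U_L$ (your parenthetical remark), so it does not really rest on the injectivity of $M$ on $U_L$; that hypothesis is what makes the factor $p^{n-k}/\deg(\gcd(L,M))$ equal to $p^{n-k}$.
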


Our new bound is very effective for many polynomials  with  large degree $d$ or large multiplicative index $\ell$. Indeed,  if $d > \frac{p^n-1}{p^k} $ or $\ell > \frac{p^n-1}{p^k}$, then our new additive index bound improves
both previous bounds.

\subsection{Additive index bounds on character sums}
Here we use the additive index in order to obtain bounds on character sums with polynomial arguments. The following technical result is required.

\begin{lemma}\label{char-aff}
Write $q=p^n$ and let $\mathcal A\subseteq \F_{q}$ be an $\F_p$-affine space of dimension $e$. If $\eta$ is a nontrivial mutiplicative character over $\F_{q}$, then we have that 
$$\left|\sum_{a\in \mathcal A}\eta(a)\right|\le  p^{\min\{e, n/2\}}.$$
\end{lemma}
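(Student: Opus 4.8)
The plan is to reduce the sum over the affine space $\mathcal{A}$ to an additive-character sum (via orthogonality) and then invoke the classical Weil bound for the resulting exponential sum. First, write $\mathcal{A} = a_0 + W$ where $W \subseteq \F_q$ is an $\F_p$-linear subspace of dimension $e$, and let $L_W(x) = \prod_{w\in W}(x-w)$ be its subspace polynomial, which is a $p$-linearized polynomial of degree $p^e$ by the remarks at the start of Section~4. The key observation is that membership in $\mathcal{A}$ can be detected by additive characters: the indicator function $\mathbf{1}_{\mathcal{A}}(x)$ equals $\frac{1}{q}\sum_{\psi}\psi(L(x-a_0))$ where the sum runs over all additive characters of the image space, or more concretely one expresses $\mathbf{1}_W(x) = \frac{1}{|W^\perp|}\sum_{t\in W^\perp}\psi_0(tx)$ for a fixed nontrivial additive character $\psi_0$ of $\F_q$ and $W^\perp$ the dual subspace (of dimension $n-e$).

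Next, substituting this into $S:=\sum_{a\in\mathcal{A}}\eta(a)$ gives
\begin{equation*}
S = \sum_{x\in \F_q}\eta(x)\,\mathbf{1}_W(x-a_0) = \frac{1}{|W^\perp|}\sum_{t\in W^\perp}\psi_0(-ta_0)\sum_{x\in\F_q}\eta(x)\psi_0(tx).
\end{equation*}
For $t=0$ the inner sum is $\sum_{x\in\F_q}\eta(x)=0$ since $\eta$ is nontrivial; for each $t\neq 0$, the inner sum is a Gauss-type sum $\sum_{x}\eta(x)\psi_0(tx)$, whose absolute value is exactly $\sqrt{q}=p^{n/2}$ (a Gauss sum). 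Hence $|S|\le \frac{1}{|W^\perp|}\cdot(|W^\perp|-1)\cdot p^{n/2} \le p^{n/2}$, which gives the bound $p^{\min\{e,n/2\}}$ in the regime where $n/2\le e$. For the other regime, I would use the trivial bound: $|S|\le |\mathcal{A}| = p^e$, which handles the case $e\le n/2$. Combining the two estimates yields $|S|\le p^{\min\{e,n/2\}}$ in all cases.

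The main thing to get right is the normalization and the evaluation of the individual character sums: one must verify that $\sum_{x\in\F_q}\eta(x)\psi_0(tx)$ has modulus exactly $\sqrt{q}$ for every $t\neq 0$ (this is the standard fact that the absolute value of a Gauss sum is $\sqrt{q}$, obtained by the usual computation of $|G|^2 = \sum_{t\neq 0}|\sum_x \eta(x)\psi_0(tx)|^2$ together with a change of variables), and that $|W^\perp| = p^{n-e}$. I expect no serious obstacle here — the only subtlety is presenting cleanly the split into the two ranges of $e$, since the bound $p^{n/2}$ coming from the character-sum method is only an improvement over the trivial bound $p^e$ when $e > n/2$. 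An alternative, slightly slicker route avoiding an explicit dual basis is to apply Lemma~\ref{lem:b2}/Theorem~\ref{thm:aux}-style reasoning together with the Weil bound for $\sum_{x\in\F_q}\eta(R(x))$ where $R$ is a suitable rational reparametrization of $\mathcal{A}$, but the Gauss-sum argument above is the most transparent.
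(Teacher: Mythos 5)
Your proof is correct, and it takes a genuinely different route from the paper: the paper's own proof is two lines, using the trivial bound $\left|\sum_{a\in\mathcal A}\eta(a)\right|\le |\mathcal A|=p^e$ when $e<n/2$ and, for $e\ge n/2$, simply citing Corollary 3.5 of \cite{R21} rather than proving the estimate. You instead give a self-contained argument for the nontrivial range: detect membership in $W$ by the dual subspace $W^\perp=\{t\in\F_q : \psi_0(tw)=1 \text{ for all } w\in W\}$, which has $\F_p$-dimension $n-e$ by nondegeneracy of the trace pairing, expand, note that the $t=0$ term vanishes because $\eta$ is nontrivial, and bound each remaining term by the exact modulus $\sqrt q$ of a Gauss sum; this yields $\bigl(1-p^{-(n-e)}\bigr)p^{n/2}<p^{n/2}$, which is even marginally sharper than the stated bound in that range, and combining with the trivial bound for $e\le n/2$ gives the lemma. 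What the paper's citation buys is brevity (and the external corollary is presumably proved by essentially this kind of Fourier argument); what your version buys is transparency and independence from \cite{R21}. Two small points to tidy: your first formulation of the indicator, $\frac{1}{q}\sum_{\psi}\psi\bigl(L_W(x-a_0)\bigr)$, is fine only if $\psi$ runs over \emph{all} additive characters of $\F_q$ (not of the image space, and with normalization $1/q$), though the concrete $W^\perp$ formula you actually use is stated correctly; and you should make explicit that $|W^\perp|=p^{n-e}$ and that every $\psi_0(t\cdot)$ with $t\ne 0$ is a nontrivial additive character, so that the Gauss-sum evaluation applies to each term.
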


\begin{proof}
If $e<n/2$ the bound is trivial and, for $e\ge n/2$, the bound follows by Corollary 3.5 in~\cite{R21}.
\end{proof}

We obtain the following result.

\begin{theorem}\label{thm:char}
%Let $P(x)=f(L(x))+M(x)\in \F_{q}[x]$ be such that $L(x), M(x)$ are $p$-linearized polynomials over $\F_{q}$, $\deg(M)<\deg(L)$ and $L(x)$ splits completely over $\F_{q}$. Let $V\subseteq \F_{q}$ be the $\F_p$-vector space comprising the roots of $L$, and let $k$ be its dimension. 

Let $P(x) \in \fq[x]$ be a polynomial with additive index $k$ and write $P(x)=f(L(x))+M(x)\in \F_{q}[x]$, where $L(x), M(x)$ are $p$-linearized polynomials over $\F_{q}$, $\deg(M)<\deg(L)=p^{n-k}$  and $L(x)$ splits completely over $\F_{q}$. 
Let  $p^e=\frac{p^{n-k}}{\deg(\gcd(L, M))}$ and let $\eta$ be a nontrivial multiplicative character of $\F_{q}$. Then the following holds:
$$\left|\sum_{x\in \F_{q}}\eta(P(x))\right|\le p^{n-e+\min\{e, n/2\}}.$$ 
\end{theorem}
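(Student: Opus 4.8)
The plan is to exploit the coset structure coming from the additive index exactly as in the proof of Theorem~\ref{prop:value-set}. Since $L$ splits completely over $\F_q$ with $\deg(L)=p^{n-k}$, the set $U_L=\{z\in\overline{\F}_q\mid L(z)=0\}$ is an $\F_p$-subspace of $\F_q$ of dimension $n-k$, and $\F_q$ is partitioned into the $p^k$ cosets $a+U_L$. By Eq.~\eqref{eq:coset}, on each such coset we have $P(a+u)=P(a)+M(u)$ for all $u\in U_L$. Hence I would first write
$$\sum_{x\in \F_q}\eta(P(x))=\sum_{a+U_L}\;\sum_{u\in U_L}\eta\bigl(P(a)+M(u)\bigr),$$
the outer sum running over a set of coset representatives.

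Next I would analyze the inner sum. Since $L$ is separable, the Rank–Nullity Theorem (as invoked in Theorem~\ref{prop:value-set}) shows that $M$ restricted to $U_L$ is a $\deg(\gcd(L,M))$-to-one map onto an $\F_p$-subspace $M(U_L)\subseteq \F_q$ of cardinality $p^e$, where $p^e=\tfrac{p^{n-k}}{\deg(\gcd(L,M))}$. Therefore, for each representative $a$,
$$\sum_{u\in U_L}\eta\bigl(P(a)+M(u)\bigr)=\deg(\gcd(L,M))\cdot\sum_{w\in P(a)+M(U_L)}\eta(w)=p^{\,n-k-e}\cdot\sum_{w\in P(a)+M(U_L)}\eta(w).$$
The set $P(a)+M(U_L)$ is an $\F_p$-affine space of dimension $e$, so Lemma~\ref{char-aff} gives $\bigl|\sum_{w\in P(a)+M(U_L)}\eta(w)\bigr|\le p^{\min\{e,\,n/2\}}$.

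Finally I would assemble the bound by the triangle inequality over the $p^k$ cosets:
$$\left|\sum_{x\in \F_q}\eta(P(x))\right|\le p^{k}\cdot p^{\,n-k-e}\cdot p^{\min\{e,\,n/2\}}=p^{\,n-e+\min\{e,\,n/2\}},$$
which is the claimed inequality. There is no real obstacle here: every ingredient — the coset decomposition, the fiber size of $M|_{U_L}$, and the affine-space character sum bound — is already available from Eq.~\eqref{eq:coset}, the argument of Theorem~\ref{prop:value-set}, and Lemma~\ref{char-aff} respectively. The only point requiring a line of care is that a translate $P(a)+M(U_L)$ of the linear space $M(U_L)$ is genuinely an $\F_p$-affine space of dimension $e$ so that Lemma~\ref{char-aff} applies verbatim, and that the fiber-counting constant $\deg(\gcd(L,M))$ combines cleanly with $p^e$ to reconstruct $p^{n-k}=|U_L|$.
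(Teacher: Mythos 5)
Your proof is correct and follows essentially the same route as the paper: decompose $\F_q$ into cosets of $U_L$, use Eq.~\eqref{eq:coset} to reduce each coset sum to $p^{\,n-k-e}$ times a character sum over the affine space $P(a)+M(U_L)$, and apply Lemma~\ref{char-aff} together with the triangle inequality over the $p^k$ cosets. No gaps; the only caveat you flag (that translates of $M(U_L)$ are $e$-dimensional affine spaces) is handled identically in the paper.
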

\begin{proof} Let  $U_L=\{z\in \F_q \mid   L(z)=0\}$ such that  $\dim U_L =n-k$ and  $\fq = \cup_{i=0}^{p^{k}-1} (a_i + U_L)$ for some $a_i \in \fq$ with $0\leq i \leq p^{k} -1$.  Let $V = M(U)$ with  $e= \dim (V)$  and let ${\mathcal A}_i = P(a_i) + V$ for $0\leq i \leq p^{k} -1$.  
We obtain that
 
 \begin{eqnarray*}
\left|  \sum_{x\in \fq} \eta(P(x)) \right| &=& \left|     \sum_{x\in \fq} \eta(f(L(x)) + M(x))     \right| \\
&=& \left|  \sum_{i=0}^{p^{k} -1} \sum_{y\in U_L}   \eta(f(L(a_i+y)) + M(a_i+y))     \right| \\
&=& \left|  \sum_{i=0}^{p^{k} -1} \sum_{y\in U_L}   \eta(f(L(a_i)) + M(a_i) +M(y))     \right| \\
&=& \left|  \sum_{i=0}^{p^{k} -1} \sum_{y\in U_L}   \eta(P(a_i) +M(y))     \right| \\
&=&   \left|  \sum_{i=0}^{p^{k} -1} p^{n-k-e}  \sum_{v\in V}   \eta(P(a_i) + v)  \right| \\
&\leq&    \sum_{i=0}^{p^{k} -1} p^{n-k-e}  \cdot \max \left\{  \left|  \sum_{u\in  {\mathcal A}_i}   \eta(u)  \right| \right\} \\
&\leq  & p^{n-e + \min\{e, n/2\} }, 
\end{eqnarray*}
where the last inequality follows from Lemma~\ref{char-aff}. 

%Following the proof of Theorem~\ref{prop:value-set}, we observe that the multiset $\{P(y)\,|\, y\in \F_{q}\}$ equals the union of $p^{n-e}$ affine spaces over $\F_p$, each of dimension $e$. The latter combined with Lemma~\ref{char-aff} yields our result.
\end{proof}
In the notation of Theorem~\ref{thm:char}, we observe that if $\deg(f)=s\ge 1$ and $P$ is not of the form $ag(x)^r$ for some divisor $r$ of $q-1$, then Weil's bound (see Theorem 5.41 of~\cite{LN}) yields
$$\left|\sum_{y\in \F_{q}}\eta(P(y))\right|\le (sp^{n-k}-1)\cdot p^{n/2}.$$
From construction $e\le n-k$, hence $(sp^{n-k}-1)p^{n/2}\ge (p^e-1)p^{n/2}\ge p^{3n/2-e}=p^{n-e+\min\{e, n/2\}}$ if $e>n/2$. Also, in the range $e>n/2$ we have that  $p^{n-e+\min\{e, n/2\}}=p^{3n/2-e}<p^n$. In summary, when $e>n/2$, the bound in Theorem~\ref{thm:char} is a nontrivial bound, which is also sharper than Weil's bound.

\begin{remark}
To the best of our knowledge, Lemma~\ref{char-aff} is the sharpest known bound considering a generic $\F_p$-affine space in $\F_q$ and a generic nontrivial multiplicative character of $\F_q$. Results on special settings are provided in~\cite{chang, R20}, and they can be applied to Theorem~\ref{thm:char} accordingly. We emphasize that any improvement on Lemma~\ref{char-aff} readily improves Theorem~\ref{thm:char}, which is based on the notion of additive index of a polynomial. 
\end{remark}

\section{Permutation Polynomials}

%\textcolor{blue}{provide some general examples (classes), avoiding known results.}

The following useful criterion first appeared  in \cite{AGW} and then it was further developed in \cite{FFA:YuanD11}, \cite{FFA:YuanD14}, \cite{Zheng16},  \cite{LiQuWang18},  among others.

\begin{lemma}[The AGW Criterion]\label{lemma:AGWCriterion}
Let $A$, $S$ and $\bar{S}$ be finite sets with $\order{S} = \order{\bar{S}}$, and let $f:A\rightarrow A$, $\bar{f}:S\rightarrow \bar{S}$, $\lambda:A\rightarrow S$, and $\bar{\lambda}:A\rightarrow \bar{S}$  be maps such that $\bar{\lambda} \circ f=\bar{f} \circ\lambda$ (see the following commutative diagram).
\[
\xymatrix{
A \ar[r]^{f}\ar[d]^{\lambda} & A \ar[d]^{\bar{\lambda}}\\
S \ar[r]^{\bar{f}} & \bar{S} }
\]

If both $\lambda$ and $\bar{\lambda}$ are surjective, then the following statements are equivalent:
\begin{itemize}
\item $f$ is a bijection from $A$ to $A$ (a permutation over $A$);
\item $\bar{f}$ is a bijection from $S$ to $\bar{S}$ and $f$ is injective on $\lambda^{-1}(s)$ for each $s\in S$.
\end{itemize}
\end{lemma}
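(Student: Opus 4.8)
The plan is to prove the AGW Criterion by a direct double-counting / injectivity-surjectivity argument using the commutative square $\bar\lambda\circ f=\bar f\circ\lambda$, where the only genuine input is the finiteness of $A$ together with $|S|=|\bar S|$. The key structural observation is that the square tells us $f$ respects the fiber partitions: if $\lambda(a)=s$ then $\bar\lambda(f(a))=\bar f(s)$, so $f$ maps the fiber $\lambda^{-1}(s)$ into the fiber $\bar\lambda^{-1}(\bar f(s))$.

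First I would prove the forward implication. Assume $f$ is a bijection of $A$. Injectivity of $f$ on each $\lambda^{-1}(s)$ is immediate since $f$ is globally injective. To see $\bar f$ is a bijection $S\to\bar S$, since $|S|=|\bar S|$ it suffices to show $\bar f$ is surjective. Given $\bar s\in\bar S$, pick any $a'\in\bar\lambda^{-1}(\bar s)$ using surjectivity of $\bar\lambda$ (note $\bar\lambda^{-1}(\bar s)$ is nonempty); since $f$ is onto, write $a'=f(a)$, and then $\bar f(\lambda(a))=\bar\lambda(f(a))=\bar\lambda(a')=\bar s$, so $\bar s$ is in the image of $\bar f$.

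Next I would prove the reverse implication. Assume $\bar f$ is a bijection $S\to\bar S$ and $f$ is injective on each fiber $\lambda^{-1}(s)$. Since $A$ is finite, it is enough to show $f$ is injective. Suppose $f(a_1)=f(a_2)$. Applying $\bar\lambda$ gives $\bar f(\lambda(a_1))=\bar f(\lambda(a_2))$, and injectivity of $\bar f$ forces $\lambda(a_1)=\lambda(a_2)=:s$. Thus $a_1,a_2$ lie in the same fiber $\lambda^{-1}(s)$, on which $f$ is injective, so $a_1=a_2$. Hence $f$ is injective, and by finiteness of $A$ it is a bijection. (One can alternatively phrase the reverse direction as a counting argument: $A$ is the disjoint union $\bigsqcup_{s\in S}\lambda^{-1}(s)$, $f$ maps $\lambda^{-1}(s)$ injectively into $\lambda^{-1}(\bar f^{-1}(\bar\lambda^{-1}(\cdot)))$... — but the direct injectivity argument is cleaner and avoids bookkeeping.)

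I do not expect a serious obstacle here; the proof is essentially a diagram chase. The one point that needs care is the role of the hypotheses: finiteness of $A$ (to pass from injectivity to bijectivity of $f$), the equality $|S|=|\bar S|$ (to pass from surjectivity to bijectivity of $\bar f$ in the forward direction), and surjectivity of $\bar\lambda$ (to guarantee every $\bar s$ is hit so that the forward surjectivity argument gets started — note surjectivity of $\lambda$ is not strictly needed for either implication as phrased, though it is natural to assume and is used implicitly if one argues by counting). I would make sure each of these is invoked exactly where needed so the logical structure is transparent.
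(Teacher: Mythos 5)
Your proof is correct: the forward direction (global bijectivity gives fiber injectivity, and surjectivity of $\bar f$ via surjectivity of $\bar\lambda$ plus $\order{S}=\order{\bar{S}}$) and the reverse direction (injectivity of $\bar f$ collapses $f(a_1)=f(a_2)$ into a single fiber, where fiber injectivity finishes, and finiteness of $A$ upgrades injectivity to bijectivity) are exactly the standard diagram-chase argument. Note that the paper itself does not prove this lemma but quotes it from \cite{AGW}; your argument matches the proof given there, and your side remark that surjectivity of $\lambda$ is not strictly used in either implication is accurate.
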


In  particular, for any polynomial  $g \in  \fq[x]$, any additive polynomials $\varphi, \psi,\bar{\psi}\in\fq[x]$ satisfying $\varphi \circ \psi = \bar{\psi} \circ \varphi$ and $\#\psi(\fq)=\#\bar{\psi}(\fq)$, and any polynomial $h \in \fq[x]$ such that $h(\psi(\F_{q}))\subseteq \F_p^*$,   the permutation polynomials of the form $f(x) := h(\psi(x)) \varphi (x) + g(\psi(x))$ over $\F_{q}$ were characterized. 

\iffalse%%%%%%%
One of two necessary and sufficient conditions requires $\ker(\varphi) \cap \ker(\psi) = \{0\}$, equivalently, $\varphi$ induces a bijection betwenen $\ker(\psi)$ and $\ker(\bar{\psi})$.  Later on, Yuan and Ding \cite{FFA:YuanD11}  extended their study to  PPs with the  form  $f(x)=g(B(x))+\sum_{i=1}^{r}\left(L_i(x)+\delta_i\right)h_i(B(x))$ over $\fqn$, where $B(x), L_1(x), \ldots, L_r(x) \in \fq[x]$ are $q$-polynomials,  $g(x) \in \fqn[x]$, $h_1(x), \ldots, h_r(x) \in \fq[x]$, and $\delta_1, \ldots, \delta_r \in \fqn$ such that $B(\delta_i) \in \fq$ and $h_i(B(\fqn))\subseteq \fq$.  In this case, the condition $\ker(B) \cap \ker(\sum_{i=1}^r L_i h_i(y)) = \{0\}$ for each $y \in B(\fqn)$ reduced to  $\gcd(\sum_{i=1}^r l_i(x) h_i(y), b(x)) =1$ for any $y \in \fq$, where $l_i(x)$ and $b(x)$ are conventional $q$-associate of $L_i(x)$ and $B(x)$. 
 %$h(x)=p(x)g(\lambda(x))$ and so on. 
 Several interesting classes of PPs of the form $L(x) +  g(x^q -x + \delta) \in \mathbb{F}_{q^n}[x]$, where $L(x) $ is a linearized polynomial and  $g(x)^q = g(x)$ were also given in \cite{FFA:YuanD14}.  
Further generic applications of AGW criterion over $\F_{q^2}$ can be found in Zheng, Yuan and Pei \cite{DCC:ZhengYP16} 
and   Li, Qu and Wang \cite{DCC:LiQW17}.   
\fi%%%%%%%%%%%%%%%%

Let $L$ be a monic $p$-linearized polynomial that divides $x^q-x$ and $M$ be a $p$-linearized polynomial such 
that $L(x) \mid L(M(x))$. By Lemma~\ref{lem:b2},   $L(x) \mid L(M(x))$ if and only if there exists another $p$-linearized polynomial $N(x)$ such that $L(M(x)) = N(L(x))$.   From the AGW criterion, we can derive the following result.

\begin{theorem}\label{cor:PP2}
Let $L$ be a monic $p$-linearized polynomial that divides $x^q-x$ and $M$ is a $p$-linearized polynomial such 
that $L(x) \mid L(M(x))$. Then  there exists a $p$-linearized polynomial $N\in \F_q[x]$ such that $N(L(x)) = L(M(x))$. In this case, the polynomial $P(x)=f(L(x))+M(x)\in \F_q[x]$ permutes $\F_q$ if and only if the following conditions hold:
\begin{enumerate}[(i)]
\item $\gcd(L(x), M(x))=x$; and 
\item  $L(f(x)) + N(x)$  is a bijection of $L(\F_q)$. 
\end{enumerate}
In particular,  $P(x)=f(L(x))+x\in \F_q[x]$ is a permutation polynomial over $\F_q$ if  and only if $L(f(x)) + x$ is a bijection of $L(\F_q)$. 
\end{theorem}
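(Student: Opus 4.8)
The plan is to apply the AGW Criterion (Lemma~\ref{lemma:AGWCriterion}) with the commutative diagram in which $\lambda = \bar\lambda = L$ viewed as a map $\F_q \to \F_q$, $A = \F_q$, and $S = \bar S = L(\F_q)$. The key compatibility to check is that $L \circ P = (L(f(x)) + N(x)) \circ L$ as maps on $\F_q$: indeed $L(P(x)) = L(f(L(x)) + M(x)) = L(f(L(x))) + L(M(x)) = L(f(L(x))) + N(L(x))$, using additivity of $L$ and the hypothesis $L(M(x)) = N(L(x))$. So the induced map $\bar P$ on $S = L(\F_q)$ is exactly $z \mapsto L(f(z)) + N(z)$, which makes sense since the argument $z$ ranges over $L(\F_q)$. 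Since $L$ divides $x^q - x$, it is separable, hence $L : \F_q \to L(\F_q)$ is surjective, so both vertical maps in the diagram are surjective and the Criterion applies.

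The existence of $N$ is immediate from Lemma~\ref{lem:b2} (stated for $Q = p$ here): $L$ is $p$-linearized and separable, $L(x) \mid L(M(x))$ by hypothesis, so $L(M(x)) = N(L(x))$ for some $p$-linearized $N \in \F_q[x]$. The AGW Criterion then says $P$ permutes $\F_q$ if and only if (a) $\bar P = L(f(x)) + N(x)$ is a bijection of $L(\F_q)$, and (b) $P$ is injective on each fiber $L^{-1}(s)$, $s \in L(\F_q)$. Each fiber is a coset $a + U_L$ where $U_L = \ker L$, and by Eq.~\eqref{eq:coset} we have $P(a + u) = P(a) + M(u)$ for $u \in U_L$; thus $P$ is injective on the fiber if and only if $M$ is injective on $U_L$, i.e. $\ker M \cap U_L = \{0\}$, i.e. $\gcd(L(x), M(x)) = x$ (the gcd of two $p$-linearized polynomials is the vanishing polynomial of the intersection of their kernels, being the largest $p$-linearized common divisor, which is separable since $L$ is). This gives the equivalence of condition (i) with fiber-injectivity, and condition (ii) is precisely (a). For the final ``in particular'' statement, take $M(x) = x$: then $L(M(x)) = L(x) = N(L(x))$ forces $N(x) = x$, condition (i) is automatic since $\gcd(L(x), x) = x$, and (ii) becomes ``$L(f(x)) + x$ is a bijection of $L(\F_q)$,'' as claimed.

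The main obstacle, such as it is, lies in handling the polynomial-versus-map distinction carefully: $f$ and $N$ are polynomials over $\F_q$, but $\bar P$ must be understood as the induced map on the subset $L(\F_q) \subseteq \F_q$, and one must verify that this map is well-defined independently of the polynomial representative — which it is, since it is literally evaluation of the polynomial $L(f(x)) + N(x)$ at points of $L(\F_q)$. A secondary point requiring care is the identification $\gcd(L, M) = x \iff \ker L \cap \ker M = \{0\}$; this follows because for $p$-linearized polynomials the monic gcd equals the subspace polynomial of the intersection of kernels (one inclusion from divisibility of subspace polynomials, the other from the fact that $L$ separable forces the gcd to be separable, and a separable $p$-linearized polynomial is determined by its root set). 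Once these two identifications are in place, the theorem is a direct translation of Lemma~\ref{lemma:AGWCriterion} through the diagram, with Eq.~\eqref{eq:coset} supplying the fiber description.
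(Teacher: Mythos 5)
Your proposal is correct and follows exactly the route the paper intends: the paper gives no written-out proof but states that the theorem is derived from the AGW criterion (Lemma~\ref{lemma:AGWCriterion}) together with Lemma~\ref{lem:b2} for the existence of $N$, which is precisely your diagram with $\lambda=\bar\lambda=L$, $S=\bar S=L(\F_q)$, the induced map $z\mapsto L(f(z))+N(z)$, and the fiber analysis via Eq.~\eqref{eq:coset} translating injectivity on cosets of $U_L$ into $\gcd(L,M)=x$. Your handling of the ``in particular'' case ($M(x)=x$ forcing $N(x)=x$) is likewise correct, so nothing further is needed.
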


We remark that if $M$ permutes the roots of $L$, then $L(x) \mid L(M(x))$. A slightly more general result can be obtained using Theorem~\ref{prop:value-set}. 

\begin{corollary}\label{cor:PP}
Let $L$ be a monic $p$-linearized polynomial that divides $x^q-x$, set $p^{n-k}=\deg(L)$ and $U_L=\{z\in\overline{\F}_q\,|\, L(z)=0\}$. If $M$ is a $p$-linearized polynomial and $P(x)=f(L(x))+M(x)\in \F_q[x]$, then $P$ is a permutation polynomial over $\F_q$ if and only if the following conditions hold:
\begin{enumerate}
\item $\gcd(L(x), M(x))=x$ or, equivalently, $M$ restricts to a bijection from $U_L$ to the set $M(U_L)$;
\item the induced map $\varphi: \F_q/U_L\mapsto \F_q/M(U_L)$ given by $\zeta \mapsto P(\zeta)$ is a bijection.
\end{enumerate}
\end{corollary}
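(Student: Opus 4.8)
The plan is to reduce everything to the value-set formula of Theorem~\ref{prop:value-set}, after first clarifying the internal equivalence inside condition~(1). Since $L$ divides $x^q-x$, it is separable and $U_L$ is an $\F_p$-subspace of $\F_q$ of dimension $n-k$; the restriction $M|_{U_L}$ is then an $\F_p$-linear map whose kernel $\{z\in U_L : M(z)=0\}$ is precisely the set of common roots of $L$ and $M$, i.e.\ the root set of $\gcd(L,M)$, which is itself separable because $L$ is. Hence $\deg\gcd(L,M)=|\ker(M|_{U_L})|$, and $\gcd(L,M)=x$ is equivalent to $M|_{U_L}$ being injective, equivalently a bijection $U_L\to M(U_L)$. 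I would also record at the outset that, by Eq.~\eqref{eq:coset}, $P(a+u)=P(a)+M(u)$ for $a\in\F_q$ and $u\in U_L$, so $P$ maps the coset $a+U_L$ into $P(a)+M(U_L)$ and the coset $P(a)+M(U_L)$ depends only on $a+U_L$; this is exactly what makes the map $\varphi\colon\F_q/U_L\to\F_q/M(U_L)$, $\zeta\mapsto P(\zeta)$, well defined.

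For the main equivalence, Theorem~\ref{prop:value-set} gives $|V_P| = c\cdot p^{n-k}/\deg\gcd(L,M)$, where $c$ is the number of distinct cosets among the $P(a)+M(U_L)$, i.e.\ $c=|\operatorname{im}\varphi|$. Since $c\le [\F_q:U_L]=p^k$ and $p^{n-k}/\deg\gcd(L,M)\le p^{n-k}$, we obtain $|V_P|\le p^n$, with equality — that is, $P$ a permutation polynomial — if and only if $c=p^k$ and $\deg\gcd(L,M)=1$. The condition $\deg\gcd(L,M)=1$ is condition~(1). Granting it, $|M(U_L)|=p^{n-k}$, so $[\F_q:M(U_L)]=p^k=[\F_q:U_L]$, and then $c=p^k$ says that $\varphi$ is a surjection between two sets of the same finite cardinality, hence a bijection, which is condition~(2); conversely, (2) forces $[\F_q:M(U_L)]=[\F_q:U_L]$, hence (1). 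This establishes the corollary. As an alternative route for the ``if'' direction, one can instead invoke the AGW criterion (Lemma~\ref{lemma:AGWCriterion}) with $A=\F_q$, $S=\F_q/U_L$, $\bar S=\F_q/M(U_L)$, $\lambda,\bar\lambda$ the natural projections and $\bar f=\varphi$: condition~(1) supplies both the hypothesis $|S|=|\bar S|$ and the injectivity of $P$ on each fiber $\lambda^{-1}(\zeta)=\zeta+U_L$ (again by Eq.~\eqref{eq:coset}), while (2) is the bijectivity of $\bar f$.

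There is no analytic content here; the one point to handle carefully is the cardinality bookkeeping, namely that the constant $c$ of Theorem~\ref{prop:value-set} is literally $|\operatorname{im}\varphi|$ and that $[\F_q:M(U_L)]$ equals $p^k$ exactly when $\gcd(L,M)=x$. Both are immediate consequences of the rank--nullity theorem applied to the $\F_p$-linear map $M|_{U_L}$ together with the separability of $L$, so I expect the argument to be short.
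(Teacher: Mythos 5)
Your proof is correct and follows essentially the same route as the paper: both reduce the statement to the value-set formula of Theorem~\ref{prop:value-set}, observing that $|V_P|=q$ forces $c=p^k$ and $\deg\gcd(L,M)=1$ and that these two conditions translate into (1) and (2). You merely spell out the bookkeeping the paper leaves implicit (that $c=|\operatorname{im}\varphi|$, that $\deg\gcd(L,M)=|\ker(M|_{U_L})|$ by separability, and the well-definedness of $\varphi$), which is fine.
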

\begin{proof}
If $V_P$ denotes the value set of $P$ over $\F_q$, we observe that $P$ is a PP over $\F_q$ if and only if $|V_P|=q$. 
In the notation of Theorem~\ref{prop:value-set}, the latter holds if and only if $c=p^{k}$ and $\deg(\gcd(L(x), M(x)))=1$. The result follows since $0\in \F_q$ is always a common root of $L$ and $M$.
\end{proof}

Motivated by Theorem~\ref{cor:PP2}, we provide the following class of PP's of $\F_q$ where the cycle decomposition can be implicitly computed.

\begin{theorem}\label{cor:conj}
Let $L$ be a monic $p$-linearized polynomial that divides $x^q-x$ and let $f\in \F_q[x]$ be a polynomial such that $L(f(L(y)))=0$ for every $y\in \F_q$. Then $P(x)=f(L(x))+x$ permutes $\F_q$. Moreover, if $t$ denotes the number of distinct roots of $f$ in the set $L(\F_{q})$, then $P(x)$ decomposes into $t\cdot \deg(L)$ cycles of length $1$ and $\frac{q-t\cdot \deg(L)}{p}$ cycles of length $p$.
\end{theorem}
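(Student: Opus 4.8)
The plan is to first establish that $P$ is a bijection of $\F_q$, then to compute all iterates $P^m$ explicitly and read off the cycle structure from them. For bijectivity, suppose $P(x_1)=P(x_2)$ with $x_1,x_2\in\F_q$, that is, $f(L(x_1))+x_1=f(L(x_2))+x_2$. Applying the additive polynomial $L$ to both sides and using $L(f(L(x_i)))=0$ (the hypothesis), we obtain $L(x_1)=L(x_2)$, hence $f(L(x_1))=f(L(x_2))$, and then the original equation forces $x_1=x_2$; since $\F_q$ is finite, $P$ permutes $\F_q$. (Alternatively this follows from Theorem~\ref{cor:PP2} with $M(x)=x$ and $N(x)=x$: condition (i) holds because $L$ is separable, and for condition (ii) every element of $L(\F_q)$ has the form $L(y)$, so $f(L(y))\in U_L=\ker L$ and $L(f(L(y)))+L(y)=L(y)$, i.e.\ $L(f(x))+x$ is the identity map on $L(\F_q)$.)

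Next I would prove by induction on $m\ge 0$ that $P^m(x)=m\cdot f(L(x))+x$ for every $x\in\F_q$, where $m$ is read modulo $p$. The case $m=0$ is immediate. For the inductive step, write $P^{m+1}(x)=f\big(L(P^m(x))\big)+P^m(x)$; since $L$ is $p$-linearized and $L(f(L(x)))=0$, we get $L(P^m(x))=m\cdot L(f(L(x)))+L(x)=L(x)$, and therefore $P^{m+1}(x)=f(L(x))+m\cdot f(L(x))+x=(m+1)f(L(x))+x$. Taking $m=p$ and using that $\F_q$ has characteristic $p$ gives $P^p=\mathrm{id}$, so every cycle of $P$ has length $1$ or $p$.

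It then remains to count fixed points. From $P(x)-x=f(L(x))$, the fixed points of $P$ are precisely the $x\in\F_q$ with $f(L(x))=0$, i.e.\ those $x$ for which $L(x)$ is a root of $f$ lying in $L(\F_q)$. The map $L\colon\F_q\to L(\F_q)$ is surjective with kernel $U_L$, and since $L$ divides the separable polynomial $x^q-x$ it has $\deg(L)$ distinct roots, all in $\F_q$; hence $|U_L|=\deg(L)$ and every fiber of $L$ over $L(\F_q)$ has exactly $\deg(L)$ elements. As there are precisely $t$ elements $y\in L(\F_q)$ with $f(y)=0$, the fixed-point set has cardinality $t\cdot\deg(L)$, giving $t\cdot\deg(L)$ cycles of length $1$. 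The remaining $q-t\cdot\deg(L)$ elements are not fixed, so they lie in cycles of length $p$, which therefore number $\tfrac{q-t\cdot\deg(L)}{p}$.

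The one substantive point is the identity $L(P^m(x))=L(x)$, which collapses the iteration to the transparent form $P^m(x)=m\,f(L(x))+x$; everything after that is bookkeeping, and I anticipate no real obstacle. One should only be careful that the hypothesis $L(f(L(y)))=0$ is used purely pointwise over $\F_q$, and that $|U_L|=\deg(L)$ is exactly the statement that $L$ is a separable divisor of $x^q-x$.
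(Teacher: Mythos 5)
Your proof is correct and follows essentially the same route as the paper: the key identity $L(P^{(m)}(x))=L(x)$, the induction giving $P^{(m)}(x)=m\cdot f(L(x))+x$, the conclusion $P^{(p)}=\mathrm{id}$, and the fixed-point count $t\cdot\deg(L)$ via the fibers of $L$. The only difference is your separate injectivity argument at the start, which is redundant, since $P^{(p)}=\mathrm{id}$ already shows $P$ permutes $\F_q$, exactly as the paper argues.
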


\begin{proof}
From hypothesis, $L(P(z))=L(z)$ for every $z\in \F_q$. In particular, for any $y\in \F_q$, we have that 
$$P(P(y))=f(L(P(y)))+P(y)=f(L(y))+f(L(y))+y=2f(L(y))+y.$$
It follows by induction that, for each positive integer $j\ge 1$, the $j$-th fold composition $P^{(j)}$ satisfies $P^{(j)}(y)=j\cdot f(L(y))+y$ for every $y\in \F_q$. In particular, taking $j=p$, we have that $P^{(p)}(y)=y$ for every $y\in \F_q$. Hence $P$ is a permutation of $\F_q$ and, since $p$ is a prime, all cycles of $P$ are of length $1$ and $p$. The cycles of length $1$ equals the number of $y\in \F_q$ such that $P(y)=y$, i.e., $f(L(y))=0$. If $\alpha\in L(y)$ is a root of $f$, we have $\deg(L)$ solutions $z\in \F_q$ to the equation $L(z)=\alpha$. Therefore, if $f$ has $t$ distinct roots in the set $L(\F_{q})$,  then $P$ has $t\cdot \deg(L)$ cycles of length $1$, from where the result follows.
\end{proof}

\begin{remark}If $L(x)$ is a monic $p$-linearized polynomial that divides $x^q-x$, Lemma~\ref{lem:b2} implies that $x^q-x=\tilde{L}(L(x))$  
for some monic $p$-linearized polynomial $\tilde{L}\in \F_q[x]$; the polynomial $\tilde{L}$ is easily obtained by expanding $x^q-x$ in basis $L(x)$. 
In the proof of Lemma 3.4 in~\cite{R21} it is proved that, in fact, we have that $L(\tilde{L}(x))=x^q-x$, i.e., the polynomials $L$ and $\tilde{L}$ commute. In particular, in the context of Theorem~\ref{cor:conj}, one may take $f$ as any polynomial of the form $\tilde{L}(g(x))$ with $g\in \fq[x]$.
\end{remark}

For instance, if $q=p^n$ with $n\ge 1$ and $L(x)=x^p-x$,  then we have that $\tilde{L}(x)=x^{p^{n-1}}+\cdots+x$ is just the absolute trace polynomial from $\F_q$ over $\F_p$. A similar example arise from the polynomials $L_b(x)=x^p-b^{p-1}x$ with $b\in \F_q^*$, but they can be covered by the former through the conjugation with the permutation $x\mapsto bx$. In~\cite{R18}, the author explores linearized polynomials $L\in \F_q$ such that $L(L(y))=0$ for every $y\in \F_q$, called {\em $2$-nilpotent linearized polynomials} (2-NLP's). In particular, from Example 2.5 of~\cite{R18}, we obtain a non trivial explicit instance of Theorem~\ref{cor:conj}. 

\begin{example}
Let $m$ be a positive integer, $n=2m$ and $q=p^n$. Then for every $f\in \F_{q}[x]$, we have that $P(x)=L(f(L(x)))+x$ permutes $\F_q$, where $L(x)=\alpha\beta x^{p^m}+\alpha x$ with $\alpha, \beta\in \F_q^*$ satisfying $\alpha^{p^m}+\alpha=0$ and $\beta^{p^m+1}=1$.
\end{example}

We observe that if $P$ is a permutation of $\F_q$ that decomposes into cycles of length $1$ and $p$, then the number of cycles of length $1$ must be divisible by $p$. As follows, we prove that there exist permutations of $\F_q$ arising from Theorem~\ref{cor:conj} with any such admissible cycle decomposition.

\begin{corollary}\label{cor:cycle}
Write $q=p^n$ and let $0\le s\le q$ be an integer divisible by $p$. Then there exists a monic $p$-linearized divisor $L$ of $x^q-x$ and $f\in \F_q[x]$ such that $P(x)=f(L(x))+x$ is a permutation of $\F_q$ that decomposes into $s$ cycles of length $1$ and $\frac{q-s}{p}$ cycles of length $p$.
\end{corollary}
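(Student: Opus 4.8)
The plan is to invoke Theorem~\ref{cor:conj} with the simplest possible choice of $L$, namely $L(x)=x^p-x$. This is a monic $p$-linearized polynomial dividing $x^q-x$ (since $\F_p\subseteq\F_q$), it has $\deg(L)=p$, and the additive map $x\mapsto x^p-x$ on $\F_q$ has kernel $\F_p$, so $U_L=\F_p$ and $L(\F_q)$ is an $\F_p$-subspace of $\F_q$ of cardinality $p^{n-1}$. With this $L$, Theorem~\ref{cor:conj} produces, for each admissible polynomial $f$, a permutation $P(x)=f(L(x))+x$ whose number of fixed points is $t\cdot\deg(L)=pt$, where $t$ is the number of distinct roots of $f$ lying in $L(\F_q)$. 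Since $s$ is divisible by $p$ and $0\le s\le q=p\cdot p^{n-1}$, the integer $t:=s/p$ satisfies $0\le t\le p^{n-1}=|L(\F_q)|$, so it remains only to exhibit a suitable $f$ with exactly $t$ roots in $L(\F_q)$.

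For that, I would fix a subset $B\subseteq L(\F_q)$ with $|B|=t$ and let $f\in\F_q[x]$ be any polynomial with $f(\beta)=0$ for all $\beta\in B$ and $f(\beta)=1$ for all $\beta\in L(\F_q)\setminus B$ (such $f$ exists by Lagrange interpolation on the set $L(\F_q)$; its values off $L(\F_q)$ are irrelevant, since $L(x)\in L(\F_q)$ for every $x\in\F_q$). Then $f$ maps $L(\F_q)$ into $\{0,1\}\subseteq\F_p$, hence for every $y\in\F_q$ the element $f(L(y))$ lies in $\F_p$ and therefore $L(f(L(y)))=f(L(y))^p-f(L(y))=0$; this verifies the hypothesis of Theorem~\ref{cor:conj}. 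Moreover the roots of $f$ inside $L(\F_q)$ are precisely the elements of $B$ (the remaining elements of $L(\F_q)$ have $f$-value $1\ne 0$), so $t$ equals $|B|$ as required. The two extreme cases are covered by the constant choices $f=1$ (giving $t=0$ and $P(x)=x+1$) and $f=0$ (giving $t=p^{n-1}$ and $P(x)=x$).

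Applying Theorem~\ref{cor:conj} to this $L$ and $f$ then yields that $P(x)=f(x^p-x)+x$ permutes $\F_q$ and decomposes into $t\cdot\deg(L)=pt=s$ cycles of length $1$ and $\tfrac{q-t\cdot\deg(L)}{p}=\tfrac{q-s}{p}$ cycles of length $p$, which is exactly the claim. There is no real obstacle here: the entire dynamical content is already contained in Theorem~\ref{cor:conj}, and the only points needing care are the bookkeeping that $t=s/p$ is a nonnegative integer not exceeding $|L(\F_q)|=p^{n-1}$ (immediate from $p\mid s$ and $s\le q$) and the trivial fact that a prescribed finite list of values can be interpolated by a polynomial over $\F_q$. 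Optionally, one can instead produce $f$ in the ``trace form'' $f(x)=\tilde L(g(x))$ highlighted in the Remark following Theorem~\ref{cor:conj}, where $\tilde L(x)=x^{p^{n-1}}+\cdots+x$; then $L(f(L(y)))=0$ holds automatically and the root count is governed by how many $\beta\in L(\F_q)$ satisfy $g(\beta)\in L(\F_q)$, which one again arranges to equal $t$ by interpolation.
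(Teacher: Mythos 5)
Your proof is correct, and it rests on the same engine as the paper's: apply Theorem~\ref{cor:conj} to a suitable pair $(L,f)$, with $f$ produced by Lagrange interpolation on $L(\F_q)$ so that $f(L(\F_q))\subseteq U_L$ and $f$ has a prescribed number of roots in $L(\F_q)$. The difference is in the choice of $L$: the paper writes $s=p^j u$ with $\gcd(p,u)=1$ and takes $L$ of degree $p^j$ (the vanishing polynomial of a $j$-dimensional subspace), arranging exactly $u$ roots of $f$ in $L(\F_q)$, whereas you fix $L(x)=x^p-x$ once and for all and take $t=s/p$ roots, using that the interpolated values $\{0,1\}$ lie in $U_L=\F_p$ so that $L(f(L(y)))=0$ automatically. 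Your uniform choice buys slightly cleaner bookkeeping (only the single constraint $0\le s/p\le p^{n-1}$, immediate from $p\mid s\le q$) and it handles the boundary cases gracefully: in particular $s=0$ is covered by $f=1$, while the paper's normalization $s=p^ju$ with $u\ge 1$ does not literally apply to $s=0$; the paper's choice, on the other hand, shows the statement can be realized with subspaces $U_L$ of every admissible dimension, not just dimension one. All the steps you rely on (that $L(\F_q)$ has $p^{n-1}$ elements by rank--nullity, that the hypothesis and the root count in Theorem~\ref{cor:conj} only involve the values of $f$ on $L(\F_q)$, and the existence of the interpolating polynomial) are sound.
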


\begin{proof}
Write $s=p^j\cdot u$ so that $\gcd(p, u)=1$, hence $1\le j\le n$ and $u\le p^{n-j}$. Let $L(x)$ be the vanishing polynomial of an arbitrary $\F_p$-vector space  $V\subseteq \F_q$ with dimension $j$, i.e., $L(x)=\prod_{v\in V}(x-v)$. Hence $L(x)$ is a monic $p$-linearized polynomial of degree $p^j$ that divides $x^q-x$ and, by Rank-Nullity Theorem, we have that $L(\F_q)$ is a set of cardinality $p^{n-j}$. Since $j\ge 1$, $V\ne \{0\}$ and so, by Lagrange's interpolation, there exists $f\in \F_q[x]$ such that $f$ maps the set $L(\F_q)$ to the set $V$ in a way that exactly $u\le p^{n-j}$ elements of $L(\F_{q})$ are mapped to $0\in V$. With this setting, $L(f(L(y))=0$ for every $y\in \F_q$ and $f$ has exactly $u$ distinct roots in the set $L(\F_{q})$. The result follows from Theorem~\ref{cor:conj}.
\end{proof}

\subsection{Inverse of PP's} The following theorem provides an implicit way of obtaining the inverse of a PP based on its decomposition as $f(L(x))+M(x)$. In particular, we conclude that the additive indices of a PP and its inverse coincide.

\begin{theorem}\label{lem:inversePP}
Let $L$ be a monic $p$-linearized polynomial that divides $x^q-x$, set $p^{n-k}=\deg(L)$ and $U_L=\{z\in\overline{\F}_q\,|\, L(z)=0\}$. Let $\{\zeta_i\}_{0\le i\le p^{k}-1}$ be any complete set of representatives for the quotient $\F_q/U_L$. Suppose that $P(x)=f(L(x))+M(x)\in \F_q[x]$ is a PP over $\F_q$, where $M$ is a $p$-linearized polynomial. Then $P_0(x)=f_0(L_0(x))+M_0(x)\in \F_q[x]$ is the inverse PP of $P$ over $\F_q$, where $f_0, L_0$ and $M_0$ are given as follows
\begin{enumerate}[(i)]
\item $L_0(x)=\prod_{u\in M(U_L)}(x-u)$;
\item $M_0\in \F_q[x]$ is the unique $p$-linearized polynomial of degree at most $p^{n-k}-1$ such that $M_0(M(u))=u$ for every $u\in U_L$;
\item $f_0\in \F_q[x]$ is the unique polynomial of degree at most $p^{k}-1$ such that $f_0(L_0(P(\zeta_i)))+M_0(P(\zeta_i))=\zeta_i$.
\end{enumerate}
In particular, the additive indices of $P$ and $P_0$ coincide.
\end{theorem}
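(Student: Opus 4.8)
The plan is to verify directly that the constructed polynomial $P_0$ satisfies $P_0(P(c))=c$ for every $c\in\F_q$; since $P$ permutes the finite set $\F_q$, this identifies $P_0$ with the compositional inverse of $P$, and the equality of additive indices then follows by reading off the degree of the subspace polynomial $L_0$ and invoking symmetry. Before the main computation I would record what the hypothesis ``$P$ is a PP'' yields through Corollary~\ref{cor:PP}: $\gcd(L,M)=x$, so $M$ restricts to an $\F_p$-linear bijection $U_L\to M(U_L)$ and $\dim_{\F_p}M(U_L)=\dim_{\F_p}U_L=n-k$; hence $L_0(x)=\prod_{u\in M(U_L)}(x-u)$ is a monic $p$-linearized divisor of $x^q-x$ of degree $p^{n-k}$. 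The inverse of $M|_{U_L}$ is an $\F_p$-linear map $M(U_L)\to\F_q$, and a counting argument --- a nonzero $p$-linearized polynomial over $\F_q$ of degree at most $p^{n-k}-1$ cannot vanish on the $p^{n-k}$-element space $M(U_L)$, while the number of such polynomials equals the number of $\F_p$-linear maps $M(U_L)\to\F_q$ --- shows this map is realised by a unique such $M_0$, establishing (ii). Finally, Corollary~\ref{cor:PP} also says $\zeta\mapsto P(\zeta)$ induces a bijection $\F_q/U_L\to\F_q/M(U_L)$, so the $P(\zeta_i)$ lie in pairwise distinct cosets of $M(U_L)$; since $L_0$ is separable with $L_0(a)=L_0(b)$ precisely when $a-b\in M(U_L)$, the interpolation nodes $L_0(P(\zeta_i))$ are pairwise distinct, and Lagrange interpolation produces the unique $f_0\in\F_q[x]$ of degree at most $p^k-1$ satisfying (iii).

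For the verification I would fix $c\in\F_q$, write $c=\zeta_i+u$ with $u\in U_L$, and compute directly: by Eq.~\eqref{eq:coset}, $P(c)=P(\zeta_i+u)=P(\zeta_i)+M(u)$. Since $M(u)\in M(U_L)$ is a root of $L_0$ and $L_0$ is $p$-linearized, $L_0(P(c))=L_0(P(\zeta_i))+L_0(M(u))=L_0(P(\zeta_i))$; since $M_0$ is $p$-linearized and $M_0(M(u))=u$, we get $M_0(P(c))=M_0(P(\zeta_i))+u$. Therefore
$$P_0(P(c))=f_0\bigl(L_0(P(\zeta_i))\bigr)+M_0(P(\zeta_i))+u=\zeta_i+u=c,$$
where the middle equality is property (iii). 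Hence $P_0\circ P=\mathrm{id}_{\F_q}$, so $P_0$ is the inverse PP of $P$. Note also that $P_0(x)=f_0(L_0(x))+M_0(x)$ exhibits $P_0$ as $L_0$-decomposable, with $L_0$ a monic $p$-linearized divisor of $x^q-x$ and $\deg M_0<\deg L_0=p^{n-k}$.

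For the final claim, apply the construction with $L=\mathcal L(P,q)$ from Theorem~\ref{thm:index}, so that $\deg L=p^{n-k_P}$ where $k_P$ is the additive index of $P$. Then $P_0$ is $L_0$-decomposable with $\deg L_0=p^{n-k_P}$, so Theorem~\ref{thm:index}(ii) forces $L_0\mid\mathcal L(P_0,q)$, whence the additive index $k_{P_0}$ of $P_0$ satisfies $k_{P_0}\le k_P$. Running the same argument with $P_0$ in place of $P$ (and its maximal decomposition) shows that the inverse of $P_0$, namely $P$, is decomposable via a subspace polynomial of degree $p^{n-k_{P_0}}$, so $k_P\le k_{P_0}$; therefore $k_P=k_{P_0}$. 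I expect the only genuine subtlety to be the well-definedness of $M_0$ and $f_0$ --- that the prescribed $\F_p$-linear data and interpolation data are realised by polynomials of the required shape and degree over $\F_q$ --- which rests on Corollary~\ref{cor:PP} and the separability of $L$; granting this, the computation of $P_0\circ P$ and the index bookkeeping are routine.
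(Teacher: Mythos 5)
Your proposal is correct and follows essentially the same route as the paper's proof: well-definedness of $M_0$ and $f_0$ via Corollary~\ref{cor:PP} and the distinctness of the nodes $L_0(P(\zeta_i))$, then the direct verification $P_0(P(\zeta_i+u))=\zeta_i+u$ using Eq.~\eqref{eq:coset}, and finally the two-sided symmetry argument for the equality of additive indices. The only differences are cosmetic: you spell out the counting argument behind the existence and uniqueness of $M_0$, obtain the distinctness of the interpolation nodes from Corollary~\ref{cor:PP}(2) rather than repeating the injectivity argument, and phrase the index comparison through Theorem~\ref{thm:index}(ii) instead of the paper's language of $M$-affine mappings of index $k$ --- none of which changes the substance.
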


\begin{proof}
Since $P$ is a PP over $\F_q$, Corollary~\ref{cor:PP} entails that $M$ restricts to a bijection from $U_L$ to $M(U_L)$, hence $M_0$ is well defined. In order to prove that $f_0$ is well defined, we just need to check that the $p^{k}$ elements $\{L_0(P(\zeta_i))\}_{0\le i\le p^{k}-1}$ are all distinct. If $L_0(P(\zeta_i))=L_0(P(\zeta_j))$, then 
$P(\zeta_i)=P(\zeta_j)+M(u)$ for some $u\in U_L$. But if $z=\zeta_j+u$, Eq.~\eqref{eq:coset} entails that $P(z)=P(\zeta_j)+M(u)$, implying that $\zeta_i=\zeta_j+u$ since $P$ permutes $\F_q$. However, $\zeta_i$ and $\zeta_j$ are coset representatives for the quotient $\F_q/U_L$ and so we have that $i=j$. 

It remains to verify that $P_0(P(y))=y$ for every $y\in \F_q$. Since $M$ is $p$-linearized, $M(U_L)$ is an $\F_p$-vector space of dimension $k$, hence $L_0$ is a separable $p$-linearized polynomial divding $x^q-x$. We apply Eq.~\eqref{eq:coset} twice: if $y=\zeta_i+u$ with $u\in U_L$, we have that $P(y)=P(\zeta_i)+M(u)$ and, since $M(u)\in M(U_L)$, we obtain that
$$P_0(P(y))=P_0(P(\zeta_j)+M(u))=P_0(P(\zeta_j))+M_0(M(u))=\zeta_j+u=y.$$
Therefore, $P_0$ is the inverse PP of $P$ over $\F_q$. Moreover, we have proved the following: if the permutation $P$ is an $M$-affine mapping of index $k$ with subspace $U_L$, then $P_0$ is an $M_0$-affine mapping of index $k$ with subspace $M(U_L)$. In particular, if the additive indices of $P$ and $P_0$ are $s$ and $t$, respectively, then $s\ge t$. Since $P$ is the inverse of $P_0$, we also obtain $t\ge s$, hence $s=t$. 
\end{proof}

%%%%%%%%%%%%%

\subsection{Involutions}
Recall that a polynomial $P\in \F_q[x]$ is an involution over $\F_q$ if $P(P(y))=y$ for every $y\in \F_q$. As a direct application of Theorem~\ref{lem:inversePP}, we obtain the following characterization of involutions over $\F_q$ induced by polynomials of the form $f(L(x))+M(x)$.

\begin{corollary}
Let $L$ be a monic $p$-linearized polynomial of degree $p^{n-k}$ that divides $x^q-x$. Let $U_L$ be the set of roots of $L$ and let $\{\zeta_i\}_{0\le i\le p^{k}-1}$ be any complete set of representatives for the quotient $\F_q/U_L$. If $P(x)=f(L(x))+M(x)\in \F_q[x]$ with $M$ a $p$-linearized polynomial, then $P$ induces an involution over $\F_q$ if and only if the following hold:
\begin{enumerate}[(i)]
\item $M(U_L)=U_L$ and $M(M(u))=u$ for every $u\in U_L$;
\item $P(P(\zeta_i))=\zeta_i$ for every $0\le i\le p^{k}-1$.
\end{enumerate} 
\end{corollary}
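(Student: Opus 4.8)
The plan is to derive this characterization of involutions directly from Theorem~\ref{lem:inversePP}, which gives the inverse permutation $P_0$ of a PP $P(x)=f(L(x))+M(x)$ explicitly in terms of the data $f_0, L_0, M_0$. First I would note that $P$ is an involution if and only if $P$ is a PP and $P=P_0$, so the first thing to establish is that conditions (i) and (ii) together force $P$ to be a permutation in the first place; this follows from Corollary~\ref{cor:PP}, since (i) gives that $M$ restricts to a bijection $U_L\to U_L$ (so $\gcd(L,M)=x$) and (ii), combined with Eq.~\eqref{eq:coset}, shows the induced map $\varphi$ on $\F_q/U_L$ is an involution and hence a bijection. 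So in both directions we may assume $P$ is a PP and compare $P$ with the explicit $P_0$ of Theorem~\ref{lem:inversePP}.

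Next I would carry out the comparison. For the ``if'' direction: assuming (i) and (ii), I read off from Theorem~\ref{lem:inversePP} that $M_0$ is the unique $p$-linearized polynomial of degree $<p^{n-k}$ with $M_0(M(u))=u$ on $U_L$; condition (i) says $M$ itself has this property and $M(U_L)=U_L$, so by uniqueness $M_0\equiv M\bmod(x^q-x)$ and $L_0(x)=\prod_{u\in M(U_L)}(x-u)=\prod_{u\in U_L}(x-u)=L(x)$. Then $f_0$ is the unique polynomial of degree $<p^{k}$ with $f_0(L(P(\zeta_i)))+M(P(\zeta_i))=\zeta_i$; using $P(P(\zeta_i))=f(L(P(\zeta_i)))+M(P(\zeta_i))=\zeta_i$ from (ii), uniqueness forces $f_0\equiv f$, so $P_0=P$ and $P$ is an involution. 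For the ``only if'' direction, if $P$ is an involution then $P=P_0$; matching the three pieces in Theorem~\ref{lem:inversePP} against those of $P$ yields $L_0=L$ (hence $M(U_L)=U_L$), $M_0=M$ on $U_L$ (hence $M(M(u))=u$), giving (i), and then the defining relation for $f_0=f$ at $\zeta_i$ reads $P(P(\zeta_i))=\zeta_i$, giving (ii). One should be slightly careful that Theorem~\ref{lem:inversePP} is stated using a chosen set of representatives $\{\zeta_i\}$; since the corollary fixes the same set, there is no issue, and Eq.~\eqref{eq:coset} guarantees that checking $P\circ P=\mathrm{id}$ on the representatives plus on $U_L$ suffices.

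The main obstacle, such as it is, is purely bookkeeping: making sure the uniqueness clauses in Theorem~\ref{lem:inversePP} (degree bounds $<p^{n-k}$ for $M_0$, $<p^{k}$ for $f_0$) are invoked correctly so that ``$P$ is its own inverse'' can be upgraded from ``$P$ and $P_0$ agree as functions on $\F_q$'' to ``the canonical data $(f_0,L_0,M_0)$ equals $(f,L,M)$'' — and conversely that matching the canonical data is exactly equivalent to $P\circ P=\mathrm{id}$. There is no substantive analytic or combinatorial difficulty here; the whole statement is a transparent specialization of the inversion formula to the fixed point $P=P_0$. I would keep the write-up short, essentially: invoke Corollary~\ref{cor:PP} to reduce to the case $P$ a PP, then quote Theorem~\ref{lem:inversePP} and observe that $P=P_0$ is equivalent, via the uniqueness of $f_0,L_0,M_0$, to the pair of conditions (i)–(ii).
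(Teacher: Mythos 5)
Your ``if'' direction is essentially workable, but the appeal to uniqueness is stated too strongly: the uniqueness clauses in Theorem~\ref{lem:inversePP} pin $M_0$ down only through its values on $M(U_L)$ and $f_0$ only at the $p^{k}$ points $L_0(P(\zeta_i))$. So condition (i) gives $M_0=M$ on $U_L$, not ``$M_0\equiv M\bmod (x^q-x)$'' (for that you would need $\deg(M)<\deg(L)$, which is not among the hypotheses), and to get $P_0=P$ as maps you must still observe that the points $L(P(\zeta_i))$ are $p^{k}$ distinct elements and hence exhaust $L(\F_q)$. In any case this direction is obtained much more cheaply from Eq.~\eqref{eq:coset} alone: by (i) we have $M(u)\in U_L$ for $u\in U_L$, so $P(P(\zeta_i+u))=P(P(\zeta_i)+M(u))=P(P(\zeta_i))+M(M(u))=\zeta_i+u$, while (i) and (ii) force $P$ to be a bijection exactly as you argue via Corollary~\ref{cor:PP}.

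The genuine gap is in the ``only if'' direction. From ``$P$ is an involution'' you get $P=P_0$ as mappings of $\F_q$, but you cannot then ``match the three pieces'': a mapping admits many representations of the shape $f(L(x))+M(x)$, and Theorem~\ref{lem:inversePP} produces data attached to the subspace $M(U_L)$, which need not coincide with $U_L$; deducing $L_0=L$ and $M_0=M$ on $U_L$ from the functional equality $P=P_0$ is precisely what has to be proved, so this step is circular. The difficulty is real, not just bookkeeping: over $\F_4$ with $\omega^2=\omega+1$, take $L(x)=x^2+x$ (so $U_L=\F_2$), $M(x)=\omega x$ and $f(y)=\omega y+\omega^2$; then $P(x)=f(L(x))+M(x)=\omega x^2+\omega^2$ induces an involution of $\F_4$, yet $M(U_L)=\{0,\omega\}\neq U_L$, so condition (i) fails for this perfectly legitimate decomposition (here $\deg M<\deg L$ and the expansion is the basis-$L$ one). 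Hence the matching argument cannot be repaired as written; the ``only if'' implication needs an extra hypothesis, e.g.\ that $k$ is the additive index of $P$, i.e.\ $L$ is the maximal subspace polynomial of $P$ (then every $w\in M(U_L)$ satisfies $P(x+w)=P(x)+M_0(w)$ by Theorem~\ref{lem:inversePP}, which forces $w\in U_L$ and then $M(M(u))=u$), and a correct write-up must argue along such lines rather than transfer the data of Theorem~\ref{lem:inversePP} to the given decomposition by fiat.
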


Recall that an element $z\in \F_q$ is a fixed point of $f\in \F_q$ if $f(z)=z$. As a direct consequence of Theorem~\ref{cor:conj}, we obtain the following family of involutions over binary fields with no fixed points.

\begin{corollary}\label{lem:inv}
Suppose that $q$ is even. Let $L$ be a monic $p$-linearized polynomial that divides $x^q-x$ and let $f\in \F_q[x]$ be a polynomial such that $L(f(L(y)))=0$ for every $y\in \F_q$. Then $P(x)=f(L(x))+x$ is an involution of $\F_q$. Moreover, if $f$ has no roots in the set $L(\F_q)$, then such involution has no fixed points.
\end{corollary}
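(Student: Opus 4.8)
The plan is to derive Corollary~\ref{lem:inv} as an immediate specialization of Theorem~\ref{cor:conj} to the case $p=2$, so the proof is essentially a two-line verification. First I would invoke Theorem~\ref{cor:conj}: under the hypothesis $L(f(L(y)))=0$ for every $y\in \F_q$, the polynomial $P(x)=f(L(x))+x$ permutes $\F_q$, and in the proof of that theorem it is shown that $P^{(j)}(y)=j\cdot f(L(y))+y$ for every $y\in \F_q$ and every $j\ge 1$. Since $q$ is even we have $p=2$, so taking $j=2$ gives $P(P(y))=2f(L(y))+y=y$ for all $y\in \F_q$; thus $P$ is an involution of $\F_q$.

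For the statement about fixed points, I would recall that a fixed point of $P$ is exactly a $y\in \F_q$ with $P(y)=y$, i.e. $f(L(y))=0$. If $z\in \F_q$ satisfies $f(L(z))=0$, then $\alpha:=L(z)$ is an element of the image set $L(\F_q)$ which is a root of $f$. Hence if $f$ has no roots in $L(\F_q)$, the equation $f(L(y))=0$ has no solution $y\in \F_q$, so $P$ has no fixed points. Conversely one could note (though it is not needed for the stated claim) that each root $\alpha$ of $f$ lying in $L(\F_q)$ contributes $\deg(L)$ fixed points, consistent with the cycle count in Theorem~\ref{cor:conj} with $p=2$.

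There is essentially no obstacle here: the whole content sits in Theorem~\ref{cor:conj}, and this corollary just reads off the $p=2$ consequence, using that in characteristic $2$ a permutation all of whose cycles have length $1$ or $2$ is precisely an involution, and that the number of fixed points is controlled by the roots of $f$ in $L(\F_q)$. The only point to be a little careful about is to cite the identity $P^{(j)}(y)=j\, f(L(y))+y$ from the proof of Theorem~\ref{cor:conj} rather than re-deriving it, and to make explicit that ``$q$ even'' means $p=2$ so that $2f(L(y))=0$ identically in $\F_q$.
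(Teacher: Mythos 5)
Your proposal is correct and follows exactly the route the paper intends: the corollary is stated there as a direct consequence of Theorem~\ref{cor:conj}, whose proof already gives $P^{(p)}(y)=p\cdot f(L(y))+y=y$ and identifies fixed points with solutions of $f(L(y))=0$, so specializing to $p=2$ yields both the involution property and the fixed-point-free criterion just as you argue.
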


%\begin{proof}
%Suppose that $P$ is an involution, hence $P(P(\zeta_i))=\zeta_i$. From Lemma~\ref{lem:inversePP} it follows that $M(U_L)=U_L$. Moreover, from the proof of item (ii) in Proposition~\ref{prop:group}, we obtain that $M(M(u))=u$ for every $u\in U_L$. Conversely suppose that (i) and (ii) hold. For $y\in \F_q$, there exist $0\le i\le p^{n-k}-1$ and $u\in U_L$ such that $y=\zeta_i+u$. In this case, Eq.~\eqref{eq:coset} entails that $P(y)=P(\zeta_i)+M(u)$ with $M(u)\in M(U_L)=U_L$ and so 
%$$P(P(y))=P(P(\zeta_i)+M(u))=P(P(\zeta_i))+M(M(u))=P(P(\zeta_i))+u=\zeta_i+u=y.$$
%In particular, $P$ is an involution over $\F_q$. 
%\end{proof}

\subsection{Linear translators}

Let $n=rk$ and  $q=p^n$.   Let $\gamma \in \fq^*, b \in \mathbb{F}_{p^k}$ and  $g: \mathbb{F}_{p^n} \rightarrow \mathbb{F}_{p^k}$.  In \cite{kyureghyan},  $g$ is  called a {\em $b$-linear translator}  if 
\[
g(x+\gamma u) = g(x) + bu \mbox{ for  all  } x \in \mathbb{F}_{q}  \  \ and \ all \ u \in \mathbb{F}_{p^k}. 
\]
More generally, $g$ is called an {\em $(i, b)$-Frobenius translator} in \cite{CPM:19} if 
\[
g(x+\gamma u) = g(x) + bu^{p^i} \mbox{ for  all  } x \in \mathbb{F}_{q}  \  \ and \ all \ u \in \mathbb{F}_{p^k}. 
\]

We remark these translators satisfy the property (\ref{eq:coset}).  Indeed,   if  we consider the subspace $U = \gamma \mathbb{F}_{p^k} \subseteq \mathbb{F}_q$  then  
\[
g(x + \gamma w)  = g(x) + M(\gamma w), \mbox{ for  all  } x \in \mathbb{F}_{q}  \  \ and \ all \ u \in \mathbb{F}_{p^k},
\]
where  $M(u) = \gamma^{-1} b u$ for $b$-linear translator and $M(u) = \gamma^{-p^i} b u^{p^i}$ for $(i, b)$-Frobenious translator.  For this reason, we introduce the following definition. 

\begin{definition}
Let $q=p^n$, let $g$ be a function from $\mathbb{F}_q$ to a subspace $U$ of $\fq$,  and  let $M \in \mathbb{F}_q[x]$ be a $p$-linearized polynomial.  Then $g$ is  an $(M, U)$-linear translator if 
\[
g(x+  u) = g(x) + M(u) \mbox{ for  all  } x \in \mathbb{F}_{q}  \  \ and \ all \ u \in U. 
\]
\end{definition}

Using the AGW criterion (see the diagram below), we can easily obtain the following result which extends the corresponding results in \cite{CPM:19, kyureghyan}.  We recall that a permutation polynomial $P(x)$ is a complete mapping if $P(x) +x$ is also a PP. 

\begin{theorem}\label{lineartranslator}
Let $U$ be a subspace of $\fq$, $g$  be a function from $\fq$ onto $U$,  $h \in \fq[x]$ such that $h(U) \subseteq U$. Assume $g$ is an $(M, U)$-linear translator.  Then $x+ h(g(x)) \in \fq[x]$ permutes $\fq$ if and only if $u + M(h(u))$ permutes $U$.  Moreover,  these PPs are complete mappings. 
\end{theorem}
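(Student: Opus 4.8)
The plan is to apply the AGW criterion (Lemma~\ref{lemma:AGWCriterion}) with the commutative diagram
\[
\xymatrix{
\fq \ar[r]^{f}\ar[d]^{\lambda} & \fq \ar[d]^{\lambda}\\
\fq/U \ar[r]^{\bar{f}} & \fq/U }
\]
where $f(x)=x+h(g(x))$, the map $\lambda:\fq\to \fq/U$ is the canonical quotient map $x\mapsto x+U$, and $\bar f$ is the induced map on cosets. The first step is to check that $\bar f$ is well-defined and to identify it. For $x\in\fq$ and $u\in U$, the $(M,U)$-linear translator property gives $g(x+u)=g(x)+M(u)$; since $M$ is $p$-linearized and $h(U)\subseteq U$, a short computation (analogous to Eq.~\eqref{eq:coset}) shows $f(x+u)=x+u+h(g(x)+M(u))$ and, because $M(u)\in U$ and $h(U)\subseteq U$, the coset $f(x+u)+U$ depends only on $x+U$; moreover $\bar f(x+U)$ is really a self-map of $\fq/U$ since $h(g(x))\in U$ forces $f(x)+U=x+U$. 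In fact this last observation shows $\bar f$ is the \emph{identity} on $\fq/U$, hence automatically a bijection, so the left-hand hypothesis of the AGW criterion reduces to: $f$ is injective on each coset $\lambda^{-1}(a+U)=a+U$.

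The second step is therefore to analyze the restriction of $f$ to a fixed coset $a+U$. Again using $g(a+u)=g(a)+M(u)$, we get
\[
f(a+u)=a+u+h(g(a)+M(u)).
\]
Here the subtlety is that $g(a)$ need not lie in $U$ a priori, so I will use the hypothesis that $g$ maps \emph{onto} $U$ — wait, $g$ takes values in $U$ by assumption ($g:\fq\to U$), so $g(a)\in U$ and hence $g(a)+M(u)$ ranges over $g(a)+M(U)$. The key point is that $f$ is injective on $a+U$ if and only if the map $u\mapsto u+h(g(a)+M(u))$ is injective on $U$, which after the affine substitution $v=g(a)+M(u)$ (noting $u\mapsto g(a)+M(u)$ need not be a bijection of $U$ in general) — this is the place requiring care. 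The cleanest route: show $f|_{a+U}$ injective for \emph{all} $a$ is equivalent to $u\mapsto u+M(h(u))$ being injective on $U$. Indeed, from $f(a+u_1)=f(a+u_2)$ we get $u_1-u_2=h(g(a)+M(u_2))-h(g(a)+M(u_1))\in U$; applying $M$ (which maps $U$ into $U$? — need $M(U)\subseteq U$, which holds since $M$ is $p$-linearized over $\fq$ and... actually one needs $g(\fq)=U$ to control this) and using that $M$ is additive, one converts this into an equation purely about the map $u\mapsto u+M(h(u))$ on $U$. Since $g$ is onto $U$, every value $g(a)$ is attained, and conversely one recovers all of $U$; this is where surjectivity of $g$ onto $U$ is essential. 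The main obstacle will be making this back-and-forth precise — i.e., showing the family of "injective on each coset" conditions collapses exactly to the single condition "$u+M(h(u))$ permutes $U$" — and getting the direction of the substitution right.

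The third and final step is the complete-mapping assertion. By the AGW equivalence we have established that $P(x)=x+h(g(x))$ permutes $\fq$ iff $u+M(h(u))$ permutes $U$. Now observe that $P(x)+x=2x+h(g(x))$... this is not obviously of the same form, so instead I would argue: the relevant condition for $P(x)+x$ to be a PP, running the same AGW diagram, is that $u\mapsto 2u+M(h(u))$ (equivalently, after noting the coset map is now $x+U\mapsto 2x+U$, still a bijection since multiplication by $2$... wait, in characteristic $2$ this fails). Hmm — this suggests the "complete mapping" claim must be read in the form that is actually robust: one checks that $P(x)+x$ decomposes as an $(M',U)$-type map whose coset map is again a bijection, and whose per-coset injectivity is again governed by $u+M(h(u))$ permuting $U$ (the $+x$ only shifts the coset-level map, not the per-fiber behavior, because $x\mapsto x$ is $U$-equivariant with $M$-part the identity). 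Concretely, $P(x)+x = x + (x+h(g(x)))$, and restricted to $a+U$ the increment from $u_1$ to $u_2$ is $2(u_1-u_2)+\big(h(g(a)+M(u_1))-h(g(a)+M(u_2))\big)$, so injectivity on the coset is the \emph{same} condition as before precisely when we already know $u+M(h(u))$ permutes $U$ (here the factor $2$ contributes a term in $U$ that is absorbed). I would present this carefully, isolating the clean statement that the per-fiber condition is insensitive to adding the identity, so that complete-mapping-ness is automatic once $P$ is a PP. The expected main difficulty remains the bookkeeping in Steps 2–3 tying the coset-wise conditions to the single permutation condition on $U$; everything else is a direct invocation of Lemma~\ref{lemma:AGWCriterion}.
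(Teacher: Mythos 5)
Your Steps 1--2 take a genuinely different (and workable) route from the paper: the paper's entire proof is the displayed diagram, i.e.\ the AGW criterion applied with $\lambda=\bar\lambda=g:\fq\to U$, $S=\bar S=U$ and $\bar f(u)=u+M(h(u))$; commutativity is just $g(x+h(g(x)))=g(x)+M(h(g(x)))$ (valid because $h(g(x))\in U$), and on each fiber $g^{-1}(c)$ the map is the translation $x\mapsto x+h(c)$, so the fiber-injectivity clause of the AGW criterion is automatic and the equivalence drops out in one line. Your quotient diagram ($\lambda:\fq\to\fq/U$, $\bar f=\mathrm{id}$) instead leaves exactly the step you flagged as ``the main obstacle'' unproved, namely that injectivity on every coset is equivalent to $u\mapsto u+M(h(u))$ permuting $U$. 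That step can be closed: first, $M(U)\subseteq U$ is automatic from the translator definition, since $M(u)=g(x+u)-g(x)\in U$ (you did not need any extra hypothesis there). For one direction, if $f(a+u_1)=f(a+u_2)$, set $v_i=g(a)+M(u_i)\in U$; then $u_1-u_2=h(v_2)-h(v_1)$, and applying the additive map $M$ gives $v_1+M(h(v_1))=v_2+M(h(v_2))$, so injectivity of $u\mapsto u+M(h(u))$ forces $v_1=v_2$ and then $u_1=u_2$. Conversely, if $w_1+M(h(w_1))=w_2+M(h(w_2))$ with $w_i\in U$, put $\delta=h(w_2)-h(w_1)\in U$, choose $a$ with $g(a)=w_1$ (surjectivity of $g$ onto $U$), and check $g(a-\delta)=w_2$ and $f(a-\delta)=f(a)$; injectivity of $f$ gives $\delta=0$ and hence $w_1=w_2$. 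So the first assertion is fine modulo this bookkeeping, but the paper's choice of $\lambda=g$ avoids it altogether.

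The genuine gap is Step 3. The claim that ``the factor $2$ contributes a term in $U$ that is absorbed'' is false: for $P(x)+x=2x+h(g(x))$ the induced map on $\fq/U$ is doubling (the zero map in characteristic $2$), and the per-coset analysis leads, by the same manipulation as above, to the condition that $u\mapsto 2u+M(h(u))$ permutes $U$ --- a different condition from $u\mapsto u+M(h(u))$ permuting $U$. Your unease about characteristic $2$ was the correct instinct, and in fact the ``moreover'' clause fails as stated, so no bookkeeping can rescue your argument. Concretely, take $q=9$, $U=\F_3$, $g(x)=x+x^3$ (so $g$ is an $(M,U)$-translator with $M(x)=2x$) and $h(x)=2x$: then $u+M(h(u))=2u$ permutes $U$ and $P(x)=x+2g(x)=2x^3$ is a PP of $\F_9$, but $P(x)+x=2x^3+x$ vanishes at $x=\pm 1$, hence is not a PP, so $P$ is not a complete mapping. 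In characteristic $2$ the situation is even worse: $P(x)+x=h(g(x))$ has image inside $U$, so it is never a PP once $U\subsetneq\fq$. The complete-mapping conclusion requires extra hypotheses (for instance $p$ odd together with $u\mapsto 2u+M(h(u))$ also permuting $U$), and the paper itself supplies no argument for it; your proof should either add such a hypothesis and prove that statement, or flag the clause as incorrect as written.
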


\[
\xymatrixcolsep{12pc}
\xymatrix{
\fq \ar[r]^{x+ h(g(x)) }\ar[d]^{g} &  \fq \ar[d]^{g}\\
U \ar[r]^{u + M(h(u))} & U }
\]

\section{Conclusion}

In this paper we introduce the additive analogue of the index of a polynomial over finite fields. We demonstrate the usage of this parameter in the study of several classical problems on  polynomials over finite fields. We expect this will motivate the further study of other problems of polynomials in terms of their additive indices.


\begin{thebibliography}{1}


\bibitem{AGW}
A. Akbary, D. Ghioca, and Q. Wang, 
\newblock On constructing permutations of finite fields.
\newblock {\em Finite Fields Appl.}   17 (2011), no. 1, 51--67.


\bibitem{AGW:09} A. Akbary, D. Ghioca, and Q. Wang, On permutation polynomials of prescribed shape,
{\it Finite Fields Appl.} {15} (2009),  195-206.


\bibitem{CPM:19}  N. Cepak, E.  Pasalic, A.  Muratovi\'{c}-Ribi\'{c}, Frobenius linear translators giving rise to new infinite classes of permutations and bent functions. Cryptogr. Commun. 11 (2019), no. 6, 1275-1295.



\bibitem{chang} M.~C.~Chang, 
\newblock{Character Sums in Finite Fields.}
\newblock in {\em Finite Fields: Theory and Applications} (Am. Math. Soc., Providence, RI): 83--98, 2010.




\bibitem{kyureghyan}
G. M.  Kyureghyan, Constructing permutations of finite fields via linear translators,  {\em J. Combin. Theory Ser. A} 118 (2011), no. 3, 1052-1061. 

\bibitem{IsikWinterhof}
L.  I\c{s}ik and A. Winterhof, Carlitz rank and index of permutation polynomials, {\em Finite
Fields Appl.} 49 (2018), 156-165.


\bibitem{LiQuWang18}
K.~Li, L.~Qu, and Q.~Wang,  New constructions of permutation polynomials of
  the form $x^rh\left(x^{q-1}\right)$ over $\mathbb{F}_{q^2}$, \emph{Des.  Codes Cryptogr.} 86 (2018), no.~10, 2379-2405.
  
  
\bibitem{LN} R.~Lidl and H.~Niederreiter, 
\newblock {\em Finite Fields} (Encyclopedia of Mathematics and its Applications). 
\newblock Cambridge: Cambridge University Press, 1996.


\bibitem{MWW14}
G. L. Mullen, D. Wan,  Q. Wang, Index bounds for value sets of polynomials over finite fields. Applied Algebra and Number Theory, 280-296, Cambridge Univ. Press, Cambridge, 2014. 







\bibitem{NW:05} H. Niederreiter and A. Winterhof, Cyclotomic $\mathcal{R}$-orthomorphisms of finite fields, {\it Discrete Math.}  {295} (2005), 161-171.


\bibitem{LR17} L.~Reis, 
\newblock The action of $\GL_2(\F_q)$ on irreducible polynomials over $\F_q$, revisited.
\newblock {\em J. Pure Appl. Algebra} 222 (2018) 1087--1094. 


\bibitem{R18} L.~Reis, 
\newblock Nilpotent linearized polynomials over finite fields and applications, 
\newblock {\em Finite Fields Appl.} 50 (2018),  279-292. 


\bibitem{R20} L.~Reis,
\newblock Character sums over affine spaces and applications.
\newblock (submitted), \url{https://arxiv.org/abs/2007.04859}, 2020.

\bibitem{R21} L.~Reis,
\newblock Arithmetic constraints of polynomial maps through discrete logarithms.
\newblock {\em J. Number Theory} (2020), \url{https://doi.org/10.1016/j.jnt.2020.10.015}.


\bibitem{Wan} D. Wan, A p-adic lifting lemma and its applications to permutation polynomials, Lecture
Notes in Pure and Appl. Math., Marcel Dekker, New York, Vol. 141, 1992, 209-216.

\bibitem{WanWang}
D. Wan, Q. Wang, Index bounds for character sums of polynomials over finite fields, {\em Des.
Codes Cryptogr.} 81 (2016), no. 3, 459-468.


\bibitem{Wang:07}
Q. Wang,  Cyclotomic mapping permutation polynomials over finite fields, Sequences, Subsequences, and
Consequences (International Workshop, SSC 2007, Los Angeles, CA, USA, May 31 - June 2, 2007), 119-128,
 Lecture Notes in Comput. Sci.  Vol. 4893, Springer, Berlin, 2007.


\bibitem{Wang:19}
Q. Wang,  Polynomials over finite fields: an index approach, in the Proceedings of Pseudo-Randomness and Finite Fields, Multivariate Algorithms and their Foundations in Number Theory, October 15-19, Linz, 2018, Combinatorics and Finite Fields. Difference Sets, Polynomials, Pseudorandomness and Applications, Degruyter, 2019, page 319-348.


\bibitem{FFA:YuanD11}
P. Yuan and C. Ding,
\newblock Permutation polynomials over finite fields from a powerful lemma.
\newblock {\em Finite Fields Appl.}  17 (2011), no. 6, 560--574.

\bibitem{FFA:YuanD14}
P. Yuan and C. Ding,
\newblock Further results on permutation polynomials over finite fields.
\newblock {\em Finite Fields Appl.} 27 (2014), 88--103.



\bibitem{Zheng16}
Y.~Zheng, P.~Yuan, and D.~Pei,  Large classes of permutation polynomials over
  $\mathbb{F}_{q^ 2} $,  \emph{Des. Codes Cryptogr.} 81 (2016), 
  no.~3,  505-521.

\end{thebibliography}
\end{document}